\theoremstyle{plain}
\newtheorem{theorem}{Theorem}[section]
\newtheorem{lemma}[theorem]{Lemma}
\newtheorem{proposition}[theorem]{Proposition}
\newtheorem{corollary}[theorem]{Corollary}
\theoremstyle{definition}
\newtheorem{remark}[theorem]{Remark}
\newtheorem{example}[theorem]{Example}
\def\rn{\mathbb R\sp n}
\def\Rn{\mathbb R\sp n}
\def\R{\mathbb R}
\def\N{\mathbb N}
\def\hra{\rightarrow}
\def\M{\mathcal M}
\def\opt{\operatorname{opt}}
\newtoks\by
\newtoks\paper
\newtoks\book
\newtoks\jour
\newtoks\yr
\newtoks\pages
\newtoks\vol
\newtoks\publ
\newtoks\eds
\newtoks\proc
\def\ota{{\hbox{???}}}
\def\cLear{\by=\ota\paper=\ota\book=\ota\jour=\ota\yr=\ota
\pages=\ota\vol=\ota\publ=\ota}
\def\endpaper{\the\by, \textit{\the\paper},
{\the\jour} \textbf{\the\vol} (\the\yr), \the\pages.\cLear}
\def\endbook{\the\by, \textit{\the\book}, \the\publ.\cLear}
\def\endprep{\the\by, \textit{\the\paper}, \the\jour.\cLear}
\def\endproc{\the\by, \textit{\the\paper}, \the\publ, \the\pages.\cLear}
\def\name#1#2{#1 #2}
\def\et{ and }
\numberwithin{equation}{section}
\begin{document}

\title[Sharp Sobolev type embeddings on the entire Euclidean space]
{Sharp Sobolev type embeddings \\ on the entire Euclidean space}

{\date\today}
\author {Angela Alberico, Andrea Cianchi, Lubo\v s Pick and Lenka Slav\'ikov\'a}

\address{Istituto per le Applicazioni di Calcolo ``M. Picone''\\
Consiglio Nazionale delle Ricerche \\
Via Pietro Castellino 111\\
80131 Napoli\\
Italy} \email{a.alberico@iac.cnr.it}

\address{Dipartimento di Matematica \lq\lq U. Dini"\\
Universit\`a di Firenze\\
Piazza Ghiberti 27\\
50122 Firenze\\
Italy} \email{cianchi@unifi.it}

\address{Department of Mathematical Analysis\\
Faculty of Mathematics and Physics\\
Charles University\\
Sokolovsk\'a~83\\
186~75 Praha~8\\
Czech Republic} \email{pick@karlin.mff.cuni.cz}

\address{Department of Mathematical Analysis\\
Faculty of Mathematics and Physics\\
Charles University\\
Sokolovsk\'a~83\\
186~75 Praha~8\\
Czech Republic}
\curraddr{Department of Mathematics\\
University of Missouri\\
Columbia, MO 65211\\
USA} \email{slavikoval@missouri.edu}

\subjclass[2000]{46E35, 46E30} \keywords{Sobolev embeddings on
$\rn$, optimal target spaces, rearrangement-invariant spaces,
Orlicz-Sobolev spaces, Lorentz-Sobolev spaces}

\begin{abstract}
A comprehensive approach to Sobolev-type embeddings, involving
arbitrary rearrangement-invariant norms  on the entire Euclidean
space $\rn$, is offered. In particular, the optimal target space in
any such embedding is exhibited. Crucial in our analysis is a new
reduction principle for the relevant embeddings, showing their
equivalence to a couple of considerably simpler one-dimensional
inequalities. Applications to  the classes of  the
 Orlicz-Sobolev and the Lorentz-Sobolev spaces are also presented.
These contributions fill in  a gap in the existing literature, where
sharp results in such a general setting are only available   for
domains of finite measure.

\end{abstract}

\maketitle

\section{Introduction}\label{S:intro}

An embedding theorem of Sobolev type amounts to a statement
asserting that a certain  degree of integrability of the (weak)
derivatives of a function entails extra integrability of the
function itself. A basic formulation concerns  the space of weakly
differentiable functions $W^{1,p}(\rn )$, endowed with the norm
$\|u\|_{W^{1,p}(\rn )}= \|u\|_{L^p(\rn)} + \|\nabla u\|_{L^p(\rn)}$.
Membership of $u$ in $W^{1,p}(\rn)$ guarantees that $u \in L^{\frac
{np}{n-p}}(\rn)$ if $1\leq p<n$, or $u \in L^\infty (\rn)$, if
$p>n$. Since $\frac {np}{n-p} >p$, this is locally a stronger
property than the a priori assumption that $u \in L^p(\rn)$, but it
is weaker, and hence does not add any further information, near
infinity.
\\ The same phenomenon occurs in a higher-order version of this result for $W^{m,p}(\rn)$, the Sobolev space
of those $m$-times weakly differentiable functions $u$ such that the
norm
\begin{equation}\label{sobnorm}
\|u\|_{W^{m,p}(\rn )}= \sum _{k=0}^m\|\nabla ^k u\|_{L^p(\rn)}
\end{equation}
is finite. Here, $\nabla ^k u$ denotes the vector of all derivatives
of $u$ of order $k$, and, in particular, $\nabla ^1u$ stands for
$\nabla u$ and $\nabla ^0 u$ for $u$. Indeed, one has that
\begin{equation}\label{classical}
W^{m,p}(\rn ) \to \begin{cases} L^{\frac{np}{n-mp}}(\rn) \cap
L^p(\rn) & \quad \hbox{if $\;1 \leq m <n$ and $1 \leq p< \frac nm$,}
\\
L^\infty(\rn)\cap L^p(\rn ) & \quad \hbox{if either $m \geq n$, or
$1 \leq m < n$ and $p> \frac nm$.}
\end{cases}
\end{equation}
Embedding \eqref{classical} is still valid if
 $\rn$ is replaced with an open subset $\Omega$ with
finite measure and regular boundary. The intersection with
$L^p(\Omega)$ is irrelevant in the target spaces of the resulting
embedding, and $L^{\frac{np}{n-mp}}(\Omega)$, or $L^\infty (\Omega)$
are  optimal  among all  Lebesgue target spaces.
 By contrast, no optimal Lebesgue target space exists in
\eqref{classical}.
\par The existence of  optimal target spaces in the Sobolev
embedding  for $W^{m,p}(\rn)$ can be restored if the class of
admissible targets is enlarged, for instance, to all Orlicz spaces.
This class allows to describe  different degrees of integrability --
not necessarily of power type -- locally and near infinity. The use
of Orlicz spaces naturally emerges in
the borderline missing case in \eqref{classical}, corresponding to
the exponents $1<p=\frac nm$, and enables one to cover the full
range of exponent $m$ and $p$. The resulting embedding takes the
form
\begin{equation}\label{E:B}
W^{m,p}(\rn ) \to L^B(\rn),
\end{equation}
where $L^B(\rn)$ is the Orlicz space associated with a Young
function $B$ obeying
\begin{equation*}
B(t) \approx
\begin{cases} t^{\frac {np}{n-mp}} & \quad \hbox{if $\;\;1\leq p < \frac nm$}
\\ e^{t^{\frac{n}{n-m}}} & \quad \hbox{if $\;\;1<p= \frac nm$}
\\ \infty & \quad \hbox{otherwise
}
\end{cases} \quad \quad\hbox{near infinity, } \quad \qquad  B(t) \approx t^p \quad \hbox{near zero.}
\end{equation*}
Here, $\approx$ denotes equivalence in the sense of Young functions
-- see Section \ref{S:preliminaries}. 

%
%
%
%
Actually, embedding \eqref{E:B} is equivalent to \eqref{classical}
if either $1\leq p < \frac nm$, or $m \geq n$, or $1 \leq m < n$ and
$p> \frac nm$. The case when $1<p = \frac nm$ is a consequence of a
result of
\cite{Poh, St, Yud}, which, for $m=1$, is also contained in
\cite{Tru}.
\\ The space $L^B(\rn)$ is the optimal (i.e. smallest) target in \eqref{E:B} among
all Orlicz spaces. However, embedding \eqref{E:B} can  still be
improved when either $1\leq p < \frac nm$, or $1<p =\frac nm$,
provided that the class of admissible target spaces is further
broadened. For instance, if $1\leq p < \frac nm$, then
\begin{equation}\label{lorentz}
W^{m,p}(\rn ) \to L^{\frac{np}{n-mp},p}(\rn) \cap L^p(\rn),
\end{equation}
where $L^{\frac{np}{n-mp},p}(\rn)$ is a Lorentz space. The
intersection space in \eqref{lorentz} is the best possible among all
rearrangement-invariant spaces. A parallel result can be shown to
hold in the limiting situation corresponding to $1<p =\frac nm$, and
involves spaces of Lorentz-Zygmund type.
\par This
follows as a special case  of the results of the present paper,
whose purpose is to  address
 the problem of  optimal embeddings, in the whole of
$\rn$, for Sobolev spaces built upon  arbitrary
rearrangement-invariant spaces. Precisely, given any
rearrangement-invariant space $X(\rn)$, we find the smallest
rearrangement-invariant space $Y(\rn)$ which renders the embedding
\begin{equation}\label{XY}
W^m X(\rn) \to Y(\rn)
\end{equation}
true. Here,
 $W^mX(\rn)$ denotes the $m$-th order Sobolev type space built upon $X(\rn)$, and equipped with the norm
defined as in \eqref{sobnorm}, with $L^p(\rn)$ replaced with
$X(\rn)$.
  A rearrangement-invariant space is, in a sense, a
space of measurable functions endowed with a norm depending only on
the measure of the level sets of the functions. A precise definition
will be recalled in the next section.
\par Questions of this kind have been  investigated in the
literature  in the case when $\rn$ is replaced  with a domain of
finite measure.  The optimal target problem  has been solved in full
generality for these domains in \cite{CPS, T2}. Apart from the
examples mentioned above, few special instances are  known in the
entire $\rn$.  In this connection, let us mention that first-order
sharp Orlicz-Sobolev embeddings in $\rn$ are established in
\cite{Ci1}, and that the paper   \cite{V} contains the borderline
case $m=1$, $p=n$ of \eqref{lorentz}.
\par A key result in our approach is what we call a reduction
principle, asserting the equivalence between a Sobolev embedding of
the form \eqref{XY}, and a couple of  one-dimensional inequalities
involving  the representation norms of
 $X(\rn)$ and $Y(\rn)$ on $(0, \infty)$.
 This is the content of
Theorem \ref{T:reduction-higher}.
\par The optimal rearrangement-invariant target space $Y(\rn)$
for $W^m X(\rn)$ in \eqref{XY} is exhibited in Theorem
\ref{T:main-higher}.
 The conclusion
shows that the phenomenon  recalled above for the standard Sobolev
spaces $W^{m,p}(\rn)$ carries over to any space $W^mX(\rn)$: no
higher  integrability  of a function near  infinity follows from
membership of its derivatives in  $X(\rn)$,   whatever
rearrangement-invariant space is $X(\rn)$. Loosely speaking, the
norm in the optimal target space  behaves locally like  the optimal
 target norm for embeddings of the space $W^mX(\rn)$ with $\rn$ replaced by a bounded subset,   and like the norm of  $X(\rn)$  near infinity.
We stress that, since the norm in $X(\rn)$ is not necessarily of
integral type, a precise definition of the norm of the optimal
target space  is not straightforward, and the proofs of Theorems
\ref{T:main-higher} and \ref{T:reduction-higher}
 call for new ingredients compared to
the finite-measure framework.
\par
Theorems \ref{T:main-higher} and \ref{T:reduction-higher} can be
applied to derive optimal embeddings for customary and
unconventional spaces of Sobolev type. In particular, we are able
 to identify the optimal
Orlicz target and the optimal rearrangement-invariant target in
Orlicz-Sobolev embeddings (Theorems \ref{T:orlicz2} and
\ref{T:orlicz1}), and the optimal rearrangement-invariant target in
Lorentz-Sobolev embeddings (Theorem
\ref{T:application-to-lorentz-spaces}).

\section{Background}\label{S:preliminaries}

Let $\mathcal{E}\subset\Rn$ be a Lebesgue measurable set. We denote
by $\M(\mathcal{E})$ the set of all Lebesgue measurable functions
$u: \mathcal{E} \to [-\infty , \infty]$. Here, vertical bars
$|\cdot|$ stand for Lebesgue measure. We also define
$\M_+(\mathcal{E})= \{u \in \M(\mathcal{E}): u \geq 0\}$, and $\M
_0(\mathcal{E})= \{u \in \M(\mathcal{E}): u\ \textup{is finite a.e.
in}\ \mathcal{E}\}$. The \textit{non-increasing rearrangement}
$u\sp* : [0, \infty) \to [0,\infty]$ of a function $u \in
\M(\mathcal{E})$ is defined as
$$
u\sp*(s)=\inf\{t\geq 0:\,|\{x\in \mathcal{E} :\,|u(x)|>t\}|\leq s\}
\qquad\textup{for}\ s\in[0,\infty).
$$
We also define $u^{**} : (0,\infty) \to [0, \infty]$ as
$$
u^{**}(s) = \frac{1}{s}\int _0^s u^*(r)\, dr \qquad\textup{for}\
s\in(0,\infty).
$$
The operation of rearrangement is neither linear, nor sublinear.
However,
\begin{equation}
\label{sum*} (u+v)^*(s) \leq u^*(s/2) + v^*(s/2) \qquad \hbox{for
$s>0$,}
\end{equation}
for every $u, v \in \mathcal M_+(\mathcal{E})$.

\par
Let $L\in(0,\infty]$. We say that a functional $\|\cdot\|_{X(0,L)}:
\M _+ (0, L) \to [0,\infty]$  is a \textit{function norm} if, for
every $f$, $g$ and $\{f_k\}$ in $\M_+(0, L)$, and every $\lambda
\geq0$, the following properties hold:
\begin{itemize}
\item[(P1)] $\|f\|_{X(0, L)}=0$ if and only if $f=0$ a.e.;
$\|\lambda f\|_{X(0, L)}= \lambda\|f\|_{X(0, L)}$; \par\noindent
\qquad $\|f+g\|_{X(0, L)}\leq \|f\|_{X(0, L)}+ \|g\|_{X(0, L)}$;
\item[(P2)] $f \le g$ a.e.\  implies $\|f\|_{X(0, L)} \le
\|g\|_{X(0, L)}$;
\item[(P3)] $f_k \nearrow f$ a.e.\ implies
$\|f_k\|_{X(0, L)} \nearrow \|f\|_{X(0, L)}$;
\item[(P4)] $\|\chi_{\mathcal F}\|_{X(0, L)}<\infty$ for every set
$\mathcal F\subset(0,L)$ of finite measure;
\item[(P5)] for every set $\mathcal F\subset(0,L)$ of finite measure
there exists a~positive constant $C_{\mathcal F}$ such that
$\int_{\mathcal F} f(s)\,ds \le C_{\mathcal F} \|f\|_{X(0, L)}$ for
every $f\in\M_+(0,L)$.
\end{itemize}
Here, and in what follows, $\chi_{\mathcal F}$ denotes the
characteristic function of a set $\mathcal F$.
\\
If, in addition,
\begin{itemize}
\item[(P6)]\qquad $\|f\|_{X(0, L)} = \|g\|_{X(0, L)}$ whenever
$f\sp* = g\sp *$,
\end{itemize}
we say that $\|\cdot\|_{X(0, L)}$ is a
\textit{rearrangement-invariant function norm}.
\par
The fundamental function  $\varphi_X : (0,L) \to [0, \infty)$ of a
rearrangement-invariant function norm $\|\cdot\|_{X(0,L)}$ is
defined as
\begin{equation}\label{fund}
\varphi_X(s)=\|\chi_{(0,s)}\|_{X(0,L)} \qquad \textup{for
$s\in(0,L)$.}
\end{equation}
If $\|\cdot\|_{X(0, L)}$ and $\|\cdot\|_{Y(0, L)}$ are
rearrangement-invariant function norms, then the functional
$\|\cdot\|_{(X\cap Y)(0, L)}$, defined by
\[
\|f\|_{(X\cap Y)(0, L)}=\|f\|_{X(0, L)} + \|f\|_{Y(0, L)}\,,
\]
is also a~rearrangement-invariant function norm.

With any rearrangement-invariant function norm $\|\cdot\|_{X(0, L)}$
is associated another functional on $\M_+(0, L)$, denoted by
$\|\cdot\|_{X'(0,L)}$, and defined as
\begin{equation*}
\|g\|_{X'(0, L)}=\sup_{\begin{tiny} \begin{array}{c}
                       {f\in \M_+(0, L)}\\
                        {\|f\|_{X(0, L)}\leq1}
                      \end{array}
                      \end{tiny}}\int_0\sp L f(s)g(s)\,ds
\end{equation*}
for $g \in  \M_+(0, L)$. It turns out that  $\|\cdot\|_{X'(0,L)}$ is
also a~rearrangement-invariant function norm, which is called
the~\textit{associate function norm} of $\|\cdot\|_{X(0,L)}$. Also,
\begin{equation*}
\|f\|_{X(0,L)}=\sup_ {\begin{tiny} \begin{array}{c}
                       {g\in \M_+(0, L)}\\
                        {\|g\|_{X'(0, L)}\leq1}
                      \end{array}
                      \end{tiny}}
\int_0\sp {L}f(s)g(s)\,ds =\|f\|_{X^{''}(0,L)}
\end{equation*}
for every $f\in\M_+(0,L)$.
\par
The generalized \textit{H\"older inequality}
\begin{equation*}
\int_{0}^Lf(s) g(s)\,ds\leq\|f\|_{X(0,L)}\|g\|_{X'(0,L)}
\end{equation*}
holds for every $f \in \mathcal M_+(0,L)$ and $g \in \mathcal
M_+(0,L)$.

Given any $t>0$, the \textit{dilation operator} $E_{\lambda}$ is
defined at $f\in \M(0,L)$ by
\begin{equation*}
  (E_{\lambda}f)(s)=\begin{cases}
  f(s/\lambda)\quad&\textup{if}\;\; s\in (0, \lambda L)\\
  0&\textup{if}\;\; s\in [\lambda L, L)\,.
  \end{cases}
\end{equation*}
 The inequality
$$
\|E_\lambda f\|_{X(0,L)} \leq \max\{1, 1/\lambda\} \|f\|_{X(0,L)}
$$
holds for every rearrangement-invariant function norm
$\|\cdot\|_{X(0,L)}$, and for every $f \in \M_+(0,L)$. 
\par
A ``localized'' notion of a rearrangement-invariant function norm
 will be needed for our purposes. The
localized rearrangement-invariant function norm of
$\|\cdot\|_{X(0,\infty)}$ is denoted by $\|\cdot\|_{\widetilde
X(0,1)}$ and defined as follows.  Given a function $f\in\M_+(0,1)$,
we call  $\widetilde f$ its continuation to $(0, \infty)$ by $0$
outside $(0,1)$, namely
\[
\widetilde f(t)=
\begin{cases}
f(s) &\textup{if}\;\;s\in(0,1),\\
0 &\textup{if}\;\; s\in[1,\infty).
\end{cases}
\]
Then, we define the functional $\|\cdot\|_{\widetilde X(0,1)}$ by
\begin{equation}\label{tilde}
\|f\|_{\widetilde X(0,1)}=\|\widetilde f\|_{X(0,\infty)}
\end{equation}
for $f\in\M_+(0,1)$. One can  verify that $\|\cdot\|_{\widetilde
X(0,1)}$ is actually a~rearrangement-invariant function norm.
\par
Given a measurable set $\mathcal{E}\subset \rn$ and a
rearrangement-invariant function norm
$\|\cdot\|_{X(0,|\mathcal{E}|)}$, the space $X(\mathcal{E})$ is
defined as the collection of all  functions  $u \in\M(\mathcal{E})$
such that the quantity
\[
\|u\|_{X(\mathcal{E})}=\|u\sp* \|_{X(0,|\mathcal{E}|)}
\]
is finite. The functional   $\|\cdot\|_{X(\mathcal{E})}$ defines a
norm on $X(\mathcal{E})$, and the latter is a Banach space  endowed
with this norm, called a~\textit{rearrangement-invariant space}.
Moreover, $X(\mathcal{E}) \subset \M_0(\mathcal{E})$ for any
rearrangement-invariant space $X(\mathcal{E})$.
\par
The rearrangement-invariant space $X'(\mathcal{E})$ built upon the
function norm $\|\cdot \|_{X'(0,|\mathcal{E}|)}$ is called the
\textit{associate space} of $X(\mathcal{E})$.
\par
Given any rearrangement-invariant spaces $X(\mathcal{E})$ and
$Y(\mathcal{E})$, one has that
\begin{equation}\label{E:equivemb}
 X(\mathcal{E}) \rightarrow Y(\mathcal{E}) \qquad \hbox{if and only
 if} \qquad
Y'(\mathcal{E}) \rightarrow X'(\mathcal{E}),
\end{equation}
with the same embedding norms.

We refer the reader to~\cite{BS} for a comprehensive treatment of
rearrangement-invariant spaces. 
\par
In remaining part of this section, we recall the definition of some
 rearrangement-invariant function norms and spaces that, besides the Lebesgue
spaces, will be called into play in our discussion.
\\ Let us begin with the Orlicz spaces, whose definition makes use of the notion of  Young function. A Young function $A: [0, \infty ) \to [0,
\infty ]$ is a convex (non trivial), left-continuous function
vanishing at $0$. Any such function takes the form
\begin{equation}\label{young}
A(t) = \int _0^t a(\tau ) d\tau \qquad  \hbox{for $t \geq 0$},
\end{equation}
for some non-decreasing, left-continuous function $a: [0, \infty )
\to [0, \infty ]$ which is neither identically equal to $0$, nor to
$\infty$.  The Orlicz space built upon the Young function $A$  is
associated
 with  the
 \emph{Luxemburg function norm} defined  as
\begin{equation*}
\|f\|_{L^A(0,L)}= \inf \left\{ \lambda >0 :  \int_0\sp L A \left(
\frac{f(s)}{\lambda} \right) ds \leq 1 \right\}
\end{equation*}
for $f \in \M _+(0,L)$. In particular, given a measurable set
$\mathcal{E} \subset \mathbb R^n$, one has that $L^A (\mathcal{E})=
L^p (\mathcal{E})$ if $A(t)= t^p$ for some $p \in [1, \infty )$, and
$L^A (\mathcal{E})= L^\infty (\mathcal{E})$ if $A(t)= \infty
\chi_{(1, \infty)}(t)$.

A Young function $A$ is said to dominate another Young function $B$
near infinity [resp. near zero] [resp. globally] if there exist
positive constants $c>0$ and $t_0>0$  such that
\[
B(t)\leq A(c\, t) \qquad \textrm{for  $t\in  (t_0, \infty)$ \, \,
[$t\in  [0, t_0)$] \,\, [$t \in [0, \infty)$]\,.}
\]
The functions $A$ and $B$ are called equivalent near infinity [near
zero] [globally] if they dominate each other near infinity [near
zero] [globally]. Equivalence of the Young functions $A$ and $B$
will be denoted by $A \approx B$.
\\
If $|\mathcal{E}|= \infty$, then
\begin{equation*}
L^A(\mathcal{E})\to L^B(\mathcal{E}) \qquad \textup{if and only
if}\qquad
 \hbox{$A$ dominates $B$ globally}\,.
\end{equation*}
If $|\mathcal{E}|< \infty$, then
\begin{equation*}
L^A(\mathcal{E})\to L^B(\mathcal{E}) \qquad \textup{if and only
if}\qquad
 \hbox{$A$ dominates $B$ near infinity}\,.
\end{equation*}

Given $1 \leq p,q\leq\infty$, we define the Lorentz functional
$\|\cdot\|_{L\sp{p,q}(0,L)}$ by
\[
\|f\|_{L\sp{p,q}(0,
L)}=\Big\|f\sp*(s)s\sp{\frac1p-\frac1q}\Big\|_{L\sp{q}(0,L)}\,
\]
for $f \in\M_+(0,{L})$. This functional is a rearrangement-invariant
function norm provided that $1 \leq q \leq p$, namely when $q \geq
1$ and the weight function $s\sp{\frac1p-\frac1q}$ is
non-increasing. In general, it is known to be equivalent, up to
multiplicative constants, to a rearrangement-invariant function norm
if (and only if) either $p=q=1$, or $1<p<\infty$ and $1\leq
q\leq\infty$, or $p=q=\infty$. The rearrangement-invariant space  on
a measurable set $\mathcal{E} \subset \rn$, built upon the latter
rearrangement-invariant function norm, is the standard Lorentz space
$L^{p,q}(\mathcal{E})$. Thus,  $L^{p,q}(\mathcal{E})$ consists of
all functions $u \in \M(\mathcal{E})$ such that the functional
$\|u\|_{L^{p,q}(\mathcal{E})}$, defined as
\begin{equation}\label{may20}
\|u\|_{L\sp{p,q}(\mathcal{E})}=\|u^*\|_{L\sp{p,q}(0, |\mathcal{E}|)}
\end{equation}
 is finite.
\\
The notion of Lorentz functional can be generalized on replacing the
function $s\sp{\frac1p-\frac1q}$ by a more general weight $w(s) \in
\M_+(0,L)$ in \eqref{may20}. The resulting functional is classically
denoted by $\|\cdot\|_{\Lambda\sp{q}(w)(0,L)}$, and reads
$$\|f\|_{\Lambda\sp{q}(w)(0,L)}=\|f\sp*(t)w(t)\|_{L\sp q(0,L)}$$
for $f \in\M_+(0,{L})$. A characterization of those exponents $q$
and  weights $w$ for which the latter functional is equivalent to a
rearrangement-invariant function norm is known. In particular, when
$q \in (1, \infty)$, this is the case if and only if there exists a
positive constant $C$ such that
\[
s\sp{q} \int_{s}\sp{L} \frac{w(r)\sp q}{r\sp q}\,dr \leq C \int_0\sp
s w(r)\sp q\,dr \quad \hbox{for $0< s < L$}
\]
\cite[Theorem~2]{Sa}. Moreover,
 $\|\cdot\|_{\Lambda\sp{1}(w)(0,L)}$ is
equivalent to a rearrangement-invariant function norm   if and only
if there exists a  constant $C$ such that
\[
\frac{1}{s}\int_{0}\sp{s}w(t)\,dt \leq
\frac{C}{r}\int_{0}\sp{r}w(t)\,dt \quad \hbox{for $0<r\leq s < L$}
\]
 \cite[Theorem~2.3]{CGS}.  Finally,
$\|\cdot\|_{\Lambda\sp{\infty}(w)(0,L)}$ is equivalent to a
rearrangement-invariant function norm   if and only if there exists
a constant $C$ such that
\[
\int_{0}\sp{s} \frac{dr}{w(r)}\leq \frac {Cs}{w(s)} \quad \hbox{for
$0< s < L$.}
\]
This follows from a result of ~\cite[Theorem~3.1]{So}. In any of these cases, we shall denote by
 $\Lambda\sp{q}(w)(\mathcal{E})$ the corresponding rearrangement-invariant space on a measurable set $\mathcal{E}\subset \rn$.
\\
A further extension of the notion of the Lorentz functional
$\|\cdot\|_{\Lambda\sp{q}(w)(0,L)}$ is obtained when the role of the
function norm $\|\cdot \|_{L^q(0,L)}$ is played by a more general
Orlicz function norm $\|\cdot \|_{L^A(0,L)}$. The  resulting
functional will be denoted by $\|\cdot\|_{\Lambda\sp{A}(w)(0,L)}$,
and defined as
\[
\|f\|_{\Lambda\sp{A}(w)(0,L)}=\|f\sp*(s)w(s)\|_{L\sp{A}(0,L)}
\]
for $f \in \M_+(0,L)$. A complete characterization of those Young
functions $A$ and weights $w$ for which the functional
$\|\cdot\|_{\Lambda\sp{A}(w)(0,L)}$ is equivalent to a
rearrangement-invariant function norm seems not to be available in
the literature. However, this functional is certainly equivalent to
a rearrangement-invariant function norm whenever the weight $w$ is
equivalent, up to multiplicative constants, to a non-increasing
function. This is the only case that will be needed in our
applications. For such a choice of weights $w$, we shall denote by
$\Lambda\sp{A}(w)(\mathcal{E})$ the corresponding
rearrangement-invariant space on a measurable set
$\mathcal{E}\subset \rn$. 
\\
A comprehensive treatment of Lorentz type functionals can be found
in~\cite{CPSS} and~\cite{FS}.

\section{Main results}\label{S:main}

Our first main result exhibits the optimal rearrangement-invariant
space into which any assigned  arbitrary-order Sobolev space is
embedded. Given an open set $\Omega$ in $\rn$ and any
rearrangement-invariant space $X(\Omega)$, the $m$-th order Sobolev
type space $W^m X(\Omega)$ is defined as
\begin{align*}W^m(X)(\Omega) = \{  u\in X(\Omega): u \;\hbox{is
$m$-times weakly differentiable, and $|\nabla ^k u|\in X(\Omega)$
for $k=1, \dots, m$}\}.
\end{align*}
The space $W^m X(\Omega)$ is a Banach space endowed with the norm
given by
\begin{equation}\label{WmX}
\|u\|_{W^m X(\Omega)} = \sum _{k=0}^{m} \|\nabla ^k u\|_{X(\Omega)}
\end{equation}
for $u \in \Omega$.
 In \eqref{WmX}, and in what follows, the
rearrangement-invariant norm of a vector is understood as the norm
of its length.

Given a rearrangement-invariant space $X(\rn)$, we say that $Y(\rn)$
is the \textit{optimal rearrangement-invariant target space} in
the~Sobolev embedding
\begin{equation}\label{embrn}
W\sp{m}X(\rn)\rightarrow Y(\rn)
\end{equation}
 if it is the smallest one that renders \eqref{embrn} true, namely, if for every rearrangement-invariant space
$S(\rn)$ such that $W\sp{m}X(\rn)\rightarrow S(\rn)$, one has that
$Y(\rn)\rightarrow S(\rn)$. The construction of the optimal space
$Y(\rn)$ in \eqref{embrn} requires a few steps.
 A first ingredient is the function norm $\|\cdot \|_{Z(0,1)}$  obeying
\begin{align}\label{Z}
\|g\|_{Z' (0,1)}=\|s\sp{\frac mn}g\sp{**}(s)\|_{\widetilde
X'(0,1)}\qquad \textup{for}\ \quad g\in\M(0,1),
\end{align}
where the localized  rearrangement-invariant function norm $\|\cdot
\|_{\widetilde{X}(0,1)}$ is defined as in \eqref{tilde}, and
$\|\cdot \|_{\widetilde{X}'(0,1)}$ stands for its associated
function norm. The function norm $\|\cdot \|_{Z (0,1)}$ determines
the optimal rearrangement-invariant space in the Sobolev embedding
for the space $W^m\widetilde X(\Omega)$ in any regular open set in
$\rn$ (with $|\Omega |=1$),   see \cite{CPS, T2}.
%
\\
Next, we extend $\| \cdot\|_{Z(0,1)}$ to a function norm  $\|
\cdot\|_{X^m(0,\infty)}$  on $(0, \infty)$ by setting
\begin{equation}\label{E:definition-of-Y-opt-2-higher}
\|f\|_{X\sp m(0,\infty)} = \|f\sp*\|_{Z(0,1)}\qquad \textup{for}\
f\in\M(0,\infty).
\end{equation}
The optimal rearrangement-invariant target space $Y(\rn)$ in
embedding \eqref{embrn} is the space $X_{\opt}^m(\rn) $ defined as
\begin{equation}\label{optspace}
X_{\opt}^m(\rn)  = (X^m \cap X)(\rn).
\end{equation}
 One has that
 $$\{u \in X^m(\rn): |{\rm supp}\; u|< \infty\} \subset X(\rn).$$
 This follows via  Proposition \ref{ZX}, Section
 \ref{S:proofs}.
  Hence, if $|{\rm supp}\; u|< \infty$, then $u \in X_{\opt}^m(\rn)$ if and only if  $u \in X^m (\rn)$.
    In this sense the optimal space $X_{\opt}^m(\rn) $ is determined  by $X^m(\rn)$ locally, and  by $X(\rn)$ near infinity.

\begin{theorem}[\textbf{Optimal Sobolev embedding}]\label{T:main-higher}
Let $n \geq 2$, $m \in \N$, and let $X(\rn)$  be
a~rearrangement-invariant space.   Then
\begin{equation}\label{E:sobolev-opt-higher}
W\sp{m}X(\rn)\to X_{\opt}^m(\rn) \,,
\end{equation}
where $X_{\opt}^m(\rn) $ is the space defined by \eqref{optspace}.
 Moreover,  $X_{\opt}^m(\rn) $ is the optimal rearrangement-invariant target space
  in ~\eqref{E:sobolev-opt-higher}.
\end{theorem}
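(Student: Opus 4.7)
The plan is to prove the theorem in two halves, the embedding statement and the optimality statement, both reducing to Theorem~\ref{T:reduction-higher} combined with the local information encoded in Proposition~\ref{ZX} and the known finite-measure theory of \cite{CPS, T2}.

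For the embedding $W^{m}X(\rn) \to X_{\opt}^{m}(\rn) = (X^{m} \cap X)(\rn)$, the containment $W^{m}X(\rn) \to X(\rn)$ is immediate, since the $k=0$ term of \eqref{WmX} is exactly $\|u\|_{X(\rn)}$. It therefore suffices to establish $W^{m}X(\rn) \to X^{m}(\rn)$. Here I would apply Theorem~\ref{T:reduction-higher}, which reduces such a Sobolev embedding to a pair of one-dimensional inequalities between the representation norms of $X$ and the candidate target on $(0, \infty)$, built around the Hardy-type kernel $s^{m/n-1}$. The point is that $\|\cdot\|_{Z(0,1)}$ is precisely the optimal rearrangement-invariant target norm for $W^{m}\widetilde{X}$ on a set of measure one, a fact encoded in the duality identity \eqref{Z} and proved in \cite{CPS, T2}. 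By extending to $(0, \infty)$ via $\|f\|_{X^{m}(0,\infty)} = \|f^{*}\|_{Z(0,1)}$ (which truncates the rearrangement at $1$), the known Hardy-type inequality on $(0,1)$ can be transported to the required pair of inequalities on $(0, \infty)$, yielding $W^{m}X(\rn) \to X^{m}(\rn)$ via Theorem~\ref{T:reduction-higher}.

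For optimality, let $S(\rn)$ be any rearrangement-invariant space with $W^{m}X(\rn) \to S(\rn)$. I want to show $X_{\opt}^{m}(\rn) \to S(\rn)$, that is, $\|u\|_{S(\rn)}$ is controlled by $\|u\|_{X^{m}(\rn)} + \|u\|_{X(\rn)}$ for every admissible $u$. Running Theorem~\ref{T:reduction-higher} in the converse direction transforms the hypothesis into one-dimensional inequalities between the representation norms of $X$ and $S$. I would then test these inequalities on an arbitrary $u\in X_{\opt}^{m}(\rn)$ after the natural split $u^{*} = u^{*}\chi_{(0,1)} + u^{*}\chi_{(1,\infty)}$, which produces a bounded-support part and a tail. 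On the bounded-support part, the local equivalence supplied by Proposition~\ref{ZX} together with the optimality of $Z(0,1)$ on a unit set reduces the problem to the finite-measure result, giving the required control by $\|u\|_{X^{m}(\rn)}$. On the tail, the dilation estimate and subadditivity property \eqref{sum*} together with the global half of the reduction inequality yield control by $\|u\|_{X(\rn)}$. Summing the two contributions gives $\|u\|_{S(\rn)} \leq C (\|u\|_{X^{m}(\rn)} + \|u\|_{X(\rn)})$, which is the desired optimality.

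The main obstacle I anticipate is the bookkeeping needed in the decomposition step for optimality, because the truncation implicit in the definition $\|f\|_{X^{m}(0,\infty)} = \|f^{*}\|_{Z(0,1)}$ discards all information about $f^{*}$ beyond the point $1$, so any genuine control near infinity must be extracted purely from the $X$-term of the intersection. Making this compatible with the one-dimensional inequalities of Theorem~\ref{T:reduction-higher}, which naturally live on $(0, \infty)$ and couple the small-$s$ and large-$s$ regimes through the Hardy kernel $s^{m/n-1}$, requires a careful use of the localization device $\|\cdot\|_{\widetilde{X}(0,1)}$ introduced in \eqref{tilde}, together with an accurate tracking of the transition between rearrangements on $(0,1)$ and on $(1, \infty)$ via \eqref{sum*} and the dilation bound. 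Proposition~\ref{ZX} is the key tool that makes this transition quantitatively clean, since it certifies that, on functions with support of measure at most one, the intersection norm collapses to $\|\cdot\|_{X^{m}(\rn)}$, which is exactly what is needed to interface with the local finite-measure theory.
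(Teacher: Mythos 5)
Your argument has a genuine circularity problem. In this paper the reduction principle (Theorem \ref{T:reduction-higher}) is not an independent input: its proof uses Theorem \ref{T:main-higher} in both directions (the sufficiency part invokes the embedding $W^mX(\rn)\to (X^m\cap X)(\rn)$, and the necessity part invokes the optimality of $X^m_{\opt}(\rn)$). So you cannot take Theorem \ref{T:reduction-higher} as the engine that produces Theorem \ref{T:main-higher} unless you first prove the reduction principle independently --- and that independent proof is precisely the content you have skipped. If the reduction principle were granted, the deductions you sketch (the Hardy inequality \eqref{hardyopt} giving the local inequality, the optimality of $Z(0,1)$ giving $Z(0,1)\to\widetilde S(0,1)$, and the tail inequality giving \eqref{E:claim-2}) would indeed be correct and short; but as written the proposal defers all of the real work to a theorem that, logically, comes after the one you are proving.

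Concretely, the missing substance is the following. For the embedding half, the paper does not reduce to one-dimensional inequalities at all: it proves a non-homogeneous first-order theorem (Theorem \ref{T:first-order}) by combining the P\'olya--Szeg\"o principle with a Poincar\'e-type inequality \eqref{reduction1} for functions of support of measure at most one and a truncation argument $u=u_1+u_2$ governed by the fundamental function $\varphi_X$; it then iterates in $m$, applying Theorem \ref{T:first-order} with $Y=X^m$ to the gradient and using the identity $\|u\|_{(X^m)^1(\rn)}\simeq\|u\|_{X^{m+1}(\rn)}$ from \cite[Corollary 9.6]{CPS}. For the optimality half, the hypothesis $W^mX(\rn)\to S(\rn)$ must be converted into one-dimensional information by explicit constructions: restriction to a unit ball via a bounded extension operator and radial test functions of the iterated-integral form \eqref{31a}, yielding \eqref{E:newly-proved-necessity} and hence \eqref{E:claim-1}; and slab-type test functions depending essentially on one coordinate, whose rearrangements are controlled through \eqref{sum*} and which, combined with Proposition \ref{P:1}, yield the tail estimate \eqref{E:claim-2}. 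None of these constructions (nor Proposition \ref{P:1}, which is indispensable for the behavior near infinity) appears in your proposal, and without them neither the embedding nor the optimality statement is actually established.
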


Theorem \ref{T:main-higher} has the following  consequence.

\begin{corollary}[\textbf{Supercritical Sobolev embedding}]\label{T:supercrit}
Let $n \geq 2$, $m \in \N$, and let $X(\rn)$  be
a~rearrangement-invariant space such that
\begin{equation}\label{conv}
\| \chi _{(0,1)}(s) s^{-1+\frac mn} \|_{X'(0,\infty)}< \infty.
\end{equation}
\textup{(}In particular, this is always the case when $m \geq n$,
whatever is $X(\rn)$.\textup{)}
 Then
  $X_{\opt}^m(\rn)=(L^\infty\cap X)(\rn)$, up to equivalent norms. Namely,
  \begin{equation}\label{E:sobolev-opt-super}
W\sp{m}X(\rn)\to (L^\infty  \cap X)(\rn) \,,
\end{equation}
  and $(L^\infty  \cap X)(\rn)$ is the optimal rearrangement-invariant target space in \eqref{E:sobolev-opt-super}.
\end{corollary}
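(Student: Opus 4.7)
The plan is to deduce Corollary~\ref{T:supercrit} from Theorem~\ref{T:main-higher} by showing that the condition~\eqref{conv} forces the auxiliary rearrangement-invariant function norm $\|\cdot\|_{Z(0,1)}$ from~\eqref{Z} to coincide with $\|\cdot\|_{L^\infty(0,1)}$ up to a multiplicative constant. Once this is in hand, the definition~\eqref{E:definition-of-Y-opt-2-higher} immediately gives $\|f\|_{X^m(0,\infty)} = \|f^*\|_{Z(0,1)} \approx \|f^*\|_{L^\infty(0,1)} = \|f\|_{L^\infty(0,\infty)}$, so $X^m(\rn) = L^\infty(\rn)$ with equivalent norms, and~\eqref{optspace} yields $X_{\opt}^m(\rn) = (L^\infty \cap X)(\rn)$. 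The optimality of $(L^\infty \cap X)(\rn)$ in~\eqref{E:sobolev-opt-super} is then inherited directly from the optimality assertion in Theorem~\ref{T:main-higher}.

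To identify $Z(0,1)$, I would pass to associate norms and prove that $Z'(0,1) = L^1(0,1)$. A preliminary bookkeeping step is the routine identification $\|g\|_{\widetilde{X}'(0,1)} = \|\widetilde{g}\|_{X'(0,\infty)}$ for $g \in \M_+(0,1)$, which follows from the definition of the associate norm combined with~(P2); this allows the hypothesis~\eqref{conv} to be restated as $C := \|\chi_{(0,1)}(s)\, s^{-1+\frac mn}\|_{\widetilde{X}'(0,1)} < \infty$. Then, for $g \in \M_+(0,1)$, using $g^{**}(s) = \frac 1s \int_0^s g^*(r)\, dr \leq \frac 1s \|g\|_{L^1(0,1)}$ inside the outer norm in~\eqref{Z}, I would estimate
\[
\|g\|_{Z'(0,1)} = \|s^{\frac mn} g^{**}(s)\|_{\widetilde{X}'(0,1)} \leq \|g\|_{L^1(0,1)} \, \bigl\|\chi_{(0,1)}(s)\, s^{-1+\frac mn}\bigr\|_{\widetilde{X}'(0,1)} = C \|g\|_{L^1(0,1)},
\]
which yields $L^1(0,1) \hookrightarrow Z'(0,1)$. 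The reverse embedding $Z'(0,1) \hookrightarrow L^1(0,1)$ follows at once from property~(P5) of the function norm $\|\cdot\|_{Z'(0,1)}$ applied to the set $(0,1)$.

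It remains to justify the parenthetical assertion that~\eqref{conv} is automatic when $m \geq n$: in this regime $-1 + \frac mn \geq 0$, hence $s^{-1 + \frac mn} \leq 1$ on $(0,1)$, so that $\chi_{(0,1)}(s)\, s^{-1+\frac mn} \leq \chi_{(0,1)}(s)$ pointwise; properties~(P2) and~(P4) applied to the function norm $\|\cdot\|_{X'(0,\infty)}$ then deliver the required finiteness. Conceptually there is no substantive obstacle beyond Theorem~\ref{T:main-higher}; the only point demanding a touch of care is the identification of $\|\cdot\|_{\widetilde{X}'(0,1)}$ on functions supported in $(0,1)$ with $\|\cdot\|_{X'(0,\infty)}$ via zero-extension, and the precise way~(P5) is invoked so that the resulting constant is extracted cleanly.
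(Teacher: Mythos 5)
Your proposal is correct and follows essentially the same route as the paper: you deduce from \eqref{conv} that $L^1(0,1)\to Z'(0,1)$ by estimating $g^{**}(s)\le s^{-1}\|g\|_{L^1(0,1)}$ inside the norm in \eqref{Z}, obtain the converse embedding from (P5), conclude $Z(0,1)=L^\infty(0,1)$, and then read off $X^m_{\opt}(\rn)=(L^\infty\cap X)(\rn)$ and its optimality from Theorem \ref{T:main-higher}. The extra bookkeeping you flag (identifying $\|\cdot\|_{\widetilde X'(0,1)}$ with the $X'(0,\infty)$ norm of the zero-extension, and the trivial verification of \eqref{conv} for $m\ge n$) is sound and only makes explicit what the paper leaves implicit.
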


Our second main result is  a~reduction theorem for Sobolev
embeddings on the entire $\rn$.

\begin{theorem}[\textbf{Reduction principle}]\label{T:reduction-higher}
Let $X(\rn)$  and $Y(\rn)$ be rearrangement-invariant spaces. Then
\begin{equation}\label{E:reduction1}
W\sp{m}X(\rn)\hra Y(\rn)
\end{equation}
if and only if there exists a constant $C$ such that
\begin{equation}\label{E:reduction2}
\bigg\|\chi _{(0,1)}(s)\int _s^1 f(r) r^{-1+\frac mn}\, dr\bigg\|_{Y
(0,\infty)} \leq C \|\chi _{(0,1)} f\|_{ X (0,\infty)}
\end{equation}
and
\begin{equation}\label{E:reduction3}
\|\chi _{(1,\infty)} f\|_{Y(0, \infty)} \leq C \|f\|_{X (0,\infty)}
\end{equation}
for every non-increasing function $f: (0, \infty ) \to [0, \infty)$.
\end{theorem}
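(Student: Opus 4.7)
The plan is to prove each implication in the equivalence \eqref{E:reduction1} $\Leftrightarrow$ (\eqref{E:reduction2} and \eqref{E:reduction3}) separately. For the sufficient direction, starting from $u \in W^m X(\rn)$ I would estimate $\|u\|_{Y(\rn)} = \|u^*\|_{Y(0,\infty)}$ by splitting the rearrangement as $u^* = \chi_{(0,1)} u^* + \chi_{(1,\infty)} u^*$ and controlling each piece separately. The tail is handled directly by applying \eqref{E:reduction3} with $f = u^*$, yielding $\|\chi_{(1,\infty)} u^*\|_{Y(0,\infty)} \leq C \|u\|_{X(\rn)}$; thus \eqref{E:reduction3} precisely encodes the absence of any gain in integrability near infinity.

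For the local portion $\chi_{(0,1)} u^*$, the key ingredient is a pointwise rearrangement-gradient inequality of the form
\[
u^*(s) \leq u^*(1) + C \int_s^1 (\nabla^m u)^*(r)\, r^{-1+\frac{m}{n}}\,dr \qquad \text{for } s \in (0,1),
\]
analogous to the Kolyada--Alvino--Talenti estimate on bounded domains. This should be derived by iterating a first-order rearrangement inequality such as $-(u^*)'(s)\, s^{1-1/n} \leq C (\nabla u)^{**}(s)$, combined with a truncation argument that replaces the usual vanishing-at-infinity term with the boundary value $u^*(1)$. The scalar $u^*(1)$ is controlled by $C\|u\|_{X(\rn)}$ via property (P5) with $\mathcal F = (0,1)$; taking $\|\cdot\|_{Y(0,\infty)}$ of both sides and invoking \eqref{E:reduction2} with $f = (\nabla^m u)^*$ then closes the local estimate, and summing with the tail bound yields \eqref{E:reduction1}.

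For the necessary direction, condition \eqref{E:reduction2} is recovered by restricting the embedding to functions $u \in W^m X(\rn)$ compactly supported in the unit ball $B$ of measure $1$; such functions, after extension by zero, give the embedding $W^m_0 X(B) \to Y(B)$, and the reduction principle on bounded domains from \cite{CPS, T2} then delivers \eqref{E:reduction2}. To recover \eqref{E:reduction3}, given a non-increasing $f \in X(0,\infty)$, I would build a sequence of smooth test functions $u_j$ on $\rn$ whose rearrangements approximate $\chi_{(1,\infty)} f$ (or dominate it) and whose higher derivatives $\nabla^k u_j$ have $X$-norms tending to zero. A natural construction uses piecewise-constant approximations of $f$ realized by bump functions on nested annular shells of appropriately large widths; plugging $u_j$ into the embedding and passing to the limit then produces \eqref{E:reduction3}.

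The principal obstacle is establishing the displayed pointwise inequality on the whole of $\rn$: standard higher-order rearrangement estimates presume compact support or decay at infinity, and one must obtain the inequality in a form that retains $u^*(1)$ as a boundary term rather than forcing $u^*(\infty) = 0$, while remaining compatible with the Hardy-type operator of \eqref{E:reduction2}. A secondary difficulty is controlling, in the necessity direction for \eqref{E:reduction3}, the $X$-norms of $u_j$ and of its derivatives uniformly across arbitrary non-increasing $f$, including those with slow decay at infinity or sharp concentration near $0$.
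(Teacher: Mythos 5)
Your sufficiency argument rests on the pointwise estimate $u^*(s) \leq u^*(1) + C \int_s^1 (\nabla^m u)^*(r)\, r^{-1+\frac mn}\, dr$, and this is precisely where the proposal breaks down: no such inequality is available for $m\geq 2$. The first-order Talenti/Kolyada-type bound comes from the P\'olya--Szeg\"o principle, which does not iterate, since $|\nabla^k u|$ is not the gradient of a rearranged lower-order derivative; the inequality that \emph{does} iterate is the O'Neil-type estimate $u^{**}(s)-u^{**}(t) \lesssim \int_s^t |\nabla u|^{**}(r) r^{\frac1n -1}\,dr$, and iterating it produces $(\nabla^m u)^{**}$, not $(\nabla^m u)^*$. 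The resulting one-dimensional inequality, with $f^{**}$ in place of $f$, is strictly stronger than the hypothesis \eqref{E:reduction2} and fails in borderline cases: for $X=L^1$ and the optimal local target $L^{\frac n{n-m},1}$, inequality \eqref{E:reduction2} holds, but testing $f=t^{-1}\chi_{(0,t)}$ as $t\to 0^+$ shows that $\big\|\chi_{(0,1)}(s)\int_s^1 f^{**}(r) r^{-1+\frac mn}dr\big\|_{L^{\frac n{n-m},1}(0,1)}$ diverges logarithmically while $\|f\|_{L^1}=1$. So the "principal obstacle" you flag is not a technical loose end but the known obstruction (truncation, symmetrization and P\'olya--Szeg\"o all fail at higher order), and the proof as proposed does not close. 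The paper avoids pointwise estimates altogether: sufficiency is deduced from Theorem \ref{T:main-higher}, itself proved by induction on $m$ through the non-homogeneous first-order embedding of Theorem \ref{T:first-order} (P\'olya--Szeg\"o at norm level plus a truncation reducing to supports of measure at most $1$) and the iteration identity $(X^m)^1 = X^{m+1}$ from \cite[Corollary~9.6]{CPS}; conditions \eqref{E:reduction2} and \eqref{E:reduction3} then enter only to show $(X^m\cap X)(\rn)\to Y(\rn)$, via the optimality of $Z(0,1)$ in \eqref{hardyopt}.

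On necessity, your restriction-to-a-ball argument for \eqref{E:reduction2} is essentially the paper's route (an extension operator reduces to the ball, after which the bounded-domain necessity of \cite{T2, CPS} applies; the paper re-proves it with explicit radial test functions). For \eqref{E:reduction3}, however, aiming at test functions whose derivative norms tend to zero is neither achievable in general nor what is needed: the right-hand side of \eqref{E:reduction3} is $\|f\|_{X(0,\infty)}$, so it suffices (and is necessary) to produce $u$ with $u^*\gtrsim f^*\chi_{(1,\infty)}$ up to dilation and $\|u\|_{W^mX(\rn)} \lesssim \|f\|_{X(0,\infty)}$ uniformly in $f$. The paper does this with functions of the form $\varphi(x')\psi(x_1)$ times an $m$-fold iterated integral of $f$ in the variable $x_1$, and the embedding then yields an inequality whose right-hand side still contains $\big\|\int_s^\infty f(r) r^{m-1}dr\big\|_{X(0,\infty)}$; a genuine self-improvement step (Proposition \ref{P:1}) is required to convert this into the clean bound $\|f^*\chi_{(1,\infty)}\|_{Y(0,\infty)}\lesssim \|f\|_{X(0,\infty)}$. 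That absorption step, together with the uniform control of the derivative norms you yourself identify as a difficulty, is missing from your sketch.
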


\begin{remark}\label{fdecreasing}
{\rm In fact, inequality \eqref{E:reduction2} turns out to hold for
every non-increasing nonnegative function $f$ if and only if it
holds for every nonnegative function $f$ -- see \cite[Corollary
9.8]{CPS}}.
\end{remark}

In the remaining part of this section we present applications of our
general results to two important families of Sobolev type  spaces:
the Orlicz-Sobolev and the Lorentz-Sobolev spaces. In the light of
Corollary \ref{T:supercrit}, we shall mainly restrict our attention
to the case when $m< n$.

Let us begin with   embeddings for Orlicz-Sobolev spaces built upon
a Young function $A$. The  notation $W^{m,A}(\rn)$ will also be
adopted instead of $W^mL^A(\rn)$. This kind of spaces is of  use in
applications to the theory of partial differential equations, whose
nonlinearity is not necessarily of power type,  which also arise in
mathematical models for diverse physical phenomena -- see e.g.
\cite{AcerbiMingione, Ball, Baroni, BreitSchirra, BreitStrofVerde,
BulDiSchw, BulMaMa, Eyring, Korolev, Lieberman, Marcellini,
Talenti79, Talenti90, Wr}.
\\ We first focus on embeddings of $W^mL^A(\rn)$ into  Orlicz spaces. We  show that an optimal target space in this class always exists, and we exhibit it explicitly.
\\
Define the function $H_{\frac nm} : [0, \infty ) \to [0, \infty )$
as
\begin{equation}\label{orlicztr2}
H_{\frac nm}(\tau) = \bigg(\int _0^\tau \bigg(\frac t
{A(t)}\bigg)^{\frac {m}{n-m}} dt\bigg)^{\frac {n-m}n} \qquad
\hbox{for $\tau\geq 0$,}
\end{equation}
and the Young function $A_{\frac nm}$ as
\begin{equation}\label{Anm}
A_{\frac nm}(t) = A \big(H_{\frac nm}^{-1} (t)\big) \qquad \hbox{for
$t \geq 0$,}
\end{equation}
where $H_{\frac nm}^{-1}$ denotes the (generalized) left-continuous
inverse of $H_{\frac nm}$ (see \cite{Ci4, Ci3}).

\begin{theorem}[\textbf{Optimal Orlicz target in Orlicz-Sobolev embeddings}]\label{T:orlicz2} Let $n \geq 2$, $m \in \N$, and
let $A$ be a Young function. Assume that   $m<n$,  and let $A_{\frac
nm}$ be the Young function defined by \eqref{Anm}. Let $A_{\rm opt}$
be a Young function such that
\begin{equation*}
A_{\rm opt}(t)\approx \begin{cases} A(t) &\textup{near}\ 0
\\
A_{\frac nm}(t) &\textup{near infinity}\,.
\end{cases}
\end{equation*}
Then
\begin{equation}\label{33}
W^{m, A}(\rn) \to L^{A_{\rm opt}}(\rn) ,
\end{equation}
and $L^{A_{\rm opt}}(\rn)$ is the optimal Orlicz target space in
\eqref{33}.
\end{theorem}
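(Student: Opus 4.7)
The plan is to deduce the embedding~\eqref{33} from the reduction principle (Theorem~\ref{T:reduction-higher}) applied with $X(\rn)=L^A(\rn)$ and $Y(\rn)=L^{A_{\rm opt}}(\rn)$, and to prove optimality by combining the sharp Orlicz-Sobolev result on bounded domains with a dilation argument that probes the behavior of $B$ near $0$. This reflects the dichotomy built into Theorem~\ref{T:main-higher}: the local regime (large values on sets of small measure) is governed by $A_{\frac nm}$, while the tail regime (small values on sets of large measure) is governed by $A$ itself---which is exactly how $A_{\rm opt}$ is assembled.

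To establish the embedding, one must verify the two one-dimensional inequalities of Theorem~\ref{T:reduction-higher}. In the local one,
\begin{equation*}
\Bigl\|\chi_{(0,1)}(s)\int_s^1 f(r)\,r^{-1+\frac{m}{n}}\,dr\Bigr\|_{L^{A_{\rm opt}}(0,\infty)} \le C\,\|\chi_{(0,1)}f\|_{L^A(0,\infty)},
\end{equation*}
both sides involve functions supported in $(0,1)$; because $|(0,1)|<\infty$ and $A_{\rm opt}\approx A_{\frac nm}$ near infinity, the relevant Orlicz norms on $(0,1)$ are equivalent to those built on $A_{\frac nm}$, and the estimate reduces to the one-dimensional Hardy-type inequality underpinning the sharp Orlicz-Sobolev embedding on a unit-measure domain, due to Cianchi~\cite{Ci3, Ci4}. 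For the tail inequality $\|\chi_{(1,\infty)}f\|_{L^{A_{\rm opt}}(0,\infty)}\le C\|f\|_{L^A(0,\infty)}$ with $f$ non-increasing, setting $\lambda=\|f\|_{L^A(0,\infty)}$ and using $sA(f(s)/\lambda)\le\int_0^sA(f(r)/\lambda)\,dr\le 1$ yields $f(s)/\lambda\le A^{-1}(1)=:M$ for every $s\ge 1$. On this bounded range the near-$0$ equivalence $A_{\rm opt}\approx A$, extended to $[0,M]$ by a continuity/compactness argument, gives $A_{\rm opt}(t)\le K A(Lt)$, and a routine Luxemburg-norm manipulation using convexity of $A$ completes the estimate.

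For optimality, assume $W^{m,A}(\rn)\to L^B(\rn)$ for some Young function $B$. Restricting to a ball of unit measure and invoking the sharp bounded-domain theorem gives that $A_{\frac nm}$ dominates $B$ near infinity. To control $B$ near $0$, test with $u_R(x)=\phi(x/R)$, where $\phi\in C^\infty_c(\rn)$ satisfies $\phi\equiv 1$ on the unit ball $B_1$ and $\operatorname{supp}\phi\subset B_2$. Rescaling shows $\|u_R\|_{L^A(\rn)}\approx 1/A^{-1}(R^{-n})$ and $\|\nabla^k u_R\|_{L^A(\rn)}\lesssim R^{-k}/A^{-1}(R^{-n})$, whence $\|u_R\|_{W^{m,A}(\rn)}\lesssim 1/A^{-1}(R^{-n})$ as $R\to\infty$; on the other hand $\|u_R\|_{L^B(\rn)}\ge\|\chi_{B_R}\|_{L^B(\rn)}\approx 1/B^{-1}(R^{-n})$. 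The assumed embedding together with the concavity of $B^{-1}$ forces $B(t)\le A(Ct)$ for small $t$. Combined with the near-infinity domination and continuity on the intermediate bounded range, this shows $A_{\rm opt}$ dominates $B$ globally, so $L^{A_{\rm opt}}(\rn)\to L^B(\rn)$, which is optimality. The main technical obstacle is the tail inequality: one must convert a near-$0$ equivalence of Young functions into a Luxemburg-norm comparison on the unbounded set $(1,\infty)$, which is what the rearrangement self-bound $f(s)\le \lambda A^{-1}(1)$ for $s\ge 1$ makes possible.
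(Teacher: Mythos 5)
Your proposal is correct in outline and runs close to the paper's strategy, but it packages the argument differently. The paper first isolates an Orlicz reduction principle (Proposition \ref{redorl}): the embedding holds if and only if the local Hardy inequality \eqref{E:reductionorl2} holds and $A$ dominates $B$ near zero, the second condition being shown equivalent to the tail inequality \eqref{E:reduction3} via Lemma \ref{lemma1} (sufficiency) and the test functions $f=\chi_{(0,r)}$, $r>1$ (necessity). Theorem \ref{T:orlicz2} is then read off: the local inequality for $A_{\rm opt}$ comes from \cite[Inequality (2.7)]{Ci4}, the near-zero condition is immediate, and for optimality the Hardy inequality forces $A_{\frac nm}$ to dominate $B$ near infinity by \cite[Proof of Theorem 3.1]{Ci3}. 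You prove the same two ingredients directly from Theorem \ref{T:reduction-higher}: your treatment of the local part is the same (equivalence of $A_{\rm opt}$ and $A_{\frac nm}$ on the finite-measure interval plus Cianchi's one-dimensional inequality), your tail estimate uses the same mechanism as Lemma \ref{lemma1} (the self-bound $f(s)\le\lambda A^{-1}(1)$ for $s\ge1$, reducing to near-zero behavior of the Young functions), and for optimality you replace the paper's one-dimensional test $f=\chi_{(0,r)}$ by a dilation argument with bumps $\phi(x/R)$ in $\rn$. The dilation argument is a legitimate, more hands-on alternative (it bypasses the necessity half of the reduction principle for the tail condition), at the price of some bookkeeping with the constants hidden in $A^{-1}$, $B^{-1}$ and of justifying the passage from the $\rn$-embedding to the unit ball (extension by zero from $W^m_0$ of the ball, or the extension operator of \cite{CR}, as in the paper).

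One point needs repair as written: in the tail estimate you propose to extend the near-zero relation $A_{\rm opt}(t)\lesssim A(Lt)$ to all of $[0,M]$, $M=A^{-1}(1)$, ``by continuity/compactness.'' This is not automatic for an arbitrary admissible $A_{\rm opt}$: when \eqref{orlicztr1bis} holds, a legitimate choice of $A_{\rm opt}$ may jump to $+\infty$ at a point of $(t_0,M)$ while $A(Lt)$ stays finite, and then no constants $K,L$ work. The fix is exactly what your own bound $f(s)\le\lambda A^{-1}(1)$ makes possible, and is the content of the paper's Lemma \ref{lemma1}: instead of enlarging the range of validity of the domination, enlarge the constant in the Luxemburg norm, i.e. estimate $\int_1^\infty A_{\rm opt}\bigl(f(s)/(C\lambda)\bigr)\,ds$ with $C\ge\max\{A^{-1}(1)/t_0,\,c\}$, so that $f(s)/(C\lambda)\le t_0$ and the near-zero domination applies directly, after which monotonicity of $A$ gives $A_{\rm opt}(f(s)/(C\lambda))\le A(f(s)/\lambda)$ and the integral is at most $1$. (Alternatively, fix a representative $A_{\rm opt}$ that is bounded on $[0,A^{-1}(1)]$, which is possible since $A_{\frac nm}\le 1$ there, and note that all admissible choices give equivalent norms.) With this adjustment, and the standard monotonicity argument for the intermediate range in the final ``global domination'' step, your proof is complete.
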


 \begin{remark}\label{replace}
In equations \eqref{orlicztr2} and \eqref{Anm}, we may assume,
without loss of generality, that
\begin{equation}\label{orlicztr1}
\int _0 \bigg(\frac t{A(t)}\bigg)^{\frac {m}{n-m}} dt < \infty\,,
\end{equation}
so that $H$ is well defined. Indeed, since only the behavior of
$A_{\frac nm}$, and hence of $H$, near infinity is relevant in view
of our result, $A$ can be replaced in \eqref{orlicztr2} and
\eqref{Anm} (if necessary) by a Young function equivalent near
infinity, which renders \eqref{orlicztr1} true.  Any such a
replacement results into an equivalent  function $A_{\frac nm}$ near
infinity.
\end{remark}

\begin{remark}\label{infinity} Notice that, if
\begin{equation}\label{orlicztr1bis}
\int ^\infty \bigg(\frac t{A(t)}\bigg)^{\frac {m}{n-m}} dt <
\infty\,,
\end{equation}
then $H^{-1}(t)= \infty$ for large $t$, and equation \eqref{Anm} has
accordingly to be interpreted as $A_{\frac nm}(t)= \infty$ for large
$t$.
\end{remark}


\begin{remark}\label{Orliczsuper}
Condition \eqref{orlicztr1bis} can be shown to agree with
\eqref{conv} when $X(\rn) = L^A(\rn)$, and    embedding \eqref{33}
recovers the conclusion of Corollary \ref{T:supercrit} in this case.
Indeed, by Remark \ref{infinity}, under assumption \eqref{conv} one
has that
\begin{equation*}
A_{\rm opt} (t) \approx \begin{cases} A(t) &\textup{if $\;\;
t\in[0,1]$}
\\ \infty &\textup{if $\;\; t\in
(1, \infty)$}\,.
\end{cases}
\end{equation*}
Owing to Lemma \ref{lemma2} below, the resulting Orlicz target space
$L^{A_{\rm opt}}(\rn)$ coincides with $(L^\infty \cap L^A)(\rn)$.
Hence, the target space $L^{A_{\rm opt}}(\rn)$ is in fact also
optimal  among all rearrangement-invariant spaces in  \eqref{33}.
\end{remark}

We next determine the optimal target space  for embeddings of
$W^{m,A}(\rn)$ among all rearrangement-invariant spaces. Such a
space turns out to be a Lorentz-Orlicz space defined as follows.
\\
Let $A$ be a Young function.
In the light of Remark \ref{Orliczsuper}, we may restrict our
attention to the case when $m<n$, and
\begin{equation}\label{divinf}
\int ^\infty \bigg(\frac t{A(t)}\bigg)^{\frac m{n-m}}\, dt =
\infty\,.
\end{equation}
Define the Young function $\widehat A$  by
\begin{equation}\label{B}
\widehat A(t)= \int_0^t \widehat  a(\tau) \; d\tau \qquad \hbox{for}
\ t\geq 0,
\end{equation}
where
\begin{equation*}
\widehat  a ^{\,-1}(s)\,=\, \left(\int_{a^{-1}(s)}^{\infty}
\left(\int_{0}^{t}\left(\frac{1}{a(\tau)}\right)^{\frac{m}{n-m}}d\tau\right)^{-\frac
nm} \frac{dt}{a(t)^{\frac n{n-m}}}\right)^{\frac{m}{m-n}} \qquad
\textrm{for} \,\, s\geq 0\,,
\end{equation*}
and $a$ is the function appearing in
 \eqref{young}.
Notice that condition \eqref{orlicztr1} is equivalent to requiring
that
\begin{equation}\label{conva}
\int _0 \bigg(\frac 1{a(\tau)}\bigg)^{\frac {m}{n-m}} d\tau <\infty
\,.
\end{equation}
 Thus, by   a reason
analogous to that explained in Remark \ref{replace}, there is no
loss of generality in assuming \eqref{conva}.

\begin{theorem}[\textbf{Optimal rearrangement-invariant target in Orlicz-Sobolev embeddings}]\label{T:orlicz1}
Let $n \geq 2$ and $m \in \N$,  and let $A$ be a Young function.
Assume that  $m<n$, and condition \eqref{divinf} is fulfilled, and
let $\widehat{A}$ be the Young function given by \eqref{B}. Let $E$
be a Young function such that
\begin{equation*}
E(t)\approx \begin{cases} A(t) &\textup{near}\ 0
\\  \widehat{A}(t) &\textup{near infinity}\,.
\end{cases}
\end{equation*}
and let $v: (0, \infty)\to (0, \infty)$ be the function defined as
\begin{equation}\label{v}
v(s)= \begin{cases} s^{-\frac mn} &\textup{if}\;\; s\in (0, 1)
\\
1 &\textup{if}\;\; s\in [1, \infty).
\end{cases}
\end{equation}
Then
\begin{equation}\label{10bis}
W^{m, A}(\rn) \to  \Lambda ^E(v)(\rn),
\end{equation}
and the Orlicz-Lorentz space $\Lambda ^E(v)(\rn)$ is the optimal
rearrangement-invariant target space in \eqref{10bis}.

\end{theorem}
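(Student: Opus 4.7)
The plan is to apply Theorem \ref{T:main-higher} with $X(\rn)=L^A(\rn)$ and then to identify the abstract optimal space $X^m_{\opt}(\rn)=(X^m\cap X)(\rn)$ with the concrete Orlicz-Lorentz space $\Lambda^E(v)(\rn)$. First, I observe that $\widetilde{X}(0,1)=L^A(0,1)$, whose associate is $L^{\widetilde A}(0,1)$ (with $\widetilde A$ the Young conjugate of $A$). Consequently, formula \eqref{Z} specialises to
\begin{equation*}
\|g\|_{Z'(0,1)}=\bigl\|s^{m/n}g^{**}(s)\bigr\|_{L^{\widetilde A}(0,1)}.
\end{equation*}

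The next step is to identify $Z(0,1)$ explicitly. I would show, by a duality argument, that $Z(0,1)=\Lambda^{\widehat{A}}(s^{-m/n})(0,1)$ up to equivalent norms, where $\widehat A$ is the Young function in \eqref{B}. This is essentially the optimal-target result for the Orlicz-Sobolev embedding on a domain of unit measure, already available from \cite{Ci3,Ci4,CPS}; the definition of $\widehat a^{-1}$ in \eqref{B} is designed precisely so that the associate norm of $\|f^*(s)s^{-m/n}\|_{L^{\widehat A}(0,1)}$ coincides (up to constants) with $\|s^{m/n}g^{**}(s)\|_{L^{\widetilde A}(0,1)}$, via a Hardy-type inequality and the change of variables $t=a^{-1}(s)$. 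Condition \eqref{divinf} enters here to ensure that $\widehat A$ is a genuine Young function and that $\Lambda^{\widehat{A}}(s^{-m/n})(0,1)$ is a rearrangement-invariant space (the weight $s^{-m/n}$ being non-increasing).

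Once $Z$ is identified, the construction \eqref{E:definition-of-Y-opt-2-higher} yields
\begin{equation*}
\|u\|_{X^m(\rn)}=\|u^*\chi_{(0,1)}\|_{Z(0,1)}\approx \bigl\|u^*(s)s^{-m/n}\chi_{(0,1)}(s)\bigr\|_{L^{\widehat A}(0,\infty)},
\end{equation*}
and $\|u\|_{X(\rn)}=\|u^*\|_{L^{A}(0,\infty)}$. I would then show that the intersection norm $\|u\|_{X^m(\rn)}+\|u\|_{X(\rn)}$ is equivalent to the single Orlicz-Lorentz norm $\|u\|_{\Lambda^E(v)(\rn)}=\|u^*(s)v(s)\|_{L^E(0,\infty)}$. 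The idea is that, since $v(s)=s^{-m/n}$ on $(0,1)$ and $v(s)=1$ on $(1,\infty)$, the argument $u^*(s)v(s)$ is typically large on $(0,1)$ (so only the behaviour of $E\approx\widehat A$ near infinity matters) and typically small on $(1,\infty)$ (so only the behaviour of $E\approx A$ near zero matters). A short lemma splitting the Luxemburg functional into its contributions on $(0,1)$ and $(1,\infty)$, combined with the definition of $E$, gives the desired equivalence. Optimality of $\Lambda^E(v)(\rn)$ then follows immediately from the optimality assertion in Theorem \ref{T:main-higher}.

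The main obstacle will be the second step: verifying in detail that $Z(0,1)=\Lambda^{\widehat{A}}(s^{-m/n})(0,1)$, i.e.\ that the somewhat intricate definition \eqref{B} of $\widehat a^{-1}$ indeed produces the Young function whose associate matches $\|s^{m/n}g^{**}(s)\|_{L^{\widetilde A}(0,1)}$. A secondary difficulty is the clean identification of the intersection with a single Orlicz-Lorentz space, where one must take care that the gluing of $A$ near $0$ and $\widehat A$ near infinity into a genuine Young function $E$ produces a norm equivalent, not merely comparable on each regime, to the sum $\|u\|_{X^m(\rn)}+\|u\|_{X(\rn)}$.
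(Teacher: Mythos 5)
Your proposal follows essentially the same route as the paper: apply Theorem \ref{T:main-higher} with $X(\rn)=L^A(\rn)$, identify $\|\cdot\|_{Z(0,1)}$ with $\|s^{-m/n}f^*(s)\|_{L^{\widehat A}(0,1)}$ via the known sharp finite-measure Orlicz--Sobolev result (the paper simply cites \cite[Theorem 3.7]{Ci3} instead of re-deriving it by duality), and then glue the two regimes into the single norm $\|v f^*\|_{L^E(0,\infty)}$, with optimality inherited from Theorem \ref{T:main-higher}. The only step you leave schematic is the gluing equivalence, which the paper settles by observing that both sides are rearrangement-invariant function norms finite on the same set of functions (invoking \cite[Theorem 1.8, Chapter 1]{BS}), with the two inclusions proved via Lemma \ref{lemma1} and the local embedding $Z(0,1)\to\widetilde X(0,1)$ of Proposition \ref{ZX} to control $\|f^*\|_{L^A(0,1)}$ --- precisely the kind of splitting lemma you anticipate.
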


\begin{example}\label {example1}  We focus here on Orlicz-Sobolev spaces built upon a special family of Orlicz spaces, called Zygmund spaces and denoted by $L^p(\log L)^\alpha(\rn)$, where either $p = 1$ and $\alpha\geq 0$, or $p > 1$ and $\alpha\in \R$. They are associated with a Young function equivalent to $t^p(1+ \log_+ t)^\alpha$, where  $\log_+ (t) = \max \{\log t, 0\}$. Of course, $L^p(\log L)^0(\mathbb R\sp n) = L^p(\rn)$.
\\ Owing to
Theorem \ref{T:orlicz2} and \cite[Example 3.4]{Ci3}, one has that
$$W^m L^p(\log L)^\alpha (\rn) \to L^G(\rn)\,,$$
where
%
%
%
%
\begin{equation*}
G(t) \approx \begin{cases} t^{\frac{np}{n-mp}}(\log t)^{\frac
{n\alpha}{n-mp}} &\textup{if}\;\; 1\leq p<\frac nm
\\
e^{t^{\frac n{n-m-\alpha m}}} &\textup{if}\;\;p=\frac nm,\, \alpha
<\frac nm -1
\\
e^{e^{t^{\frac {n}{n-m}}}} &\textup{if}\;\; p=\frac nm,\, \alpha
=\frac nm-1
\\ \infty & \hbox{
otherwise
}
\end{cases} \quad \quad \hbox{near infinity,} \qquad G(t) \approx t^p \quad  \hbox{near zero.}
\end{equation*}
Moreover, $L^G(\rn)$ is  optimal among all Orlicz spaces.
\\ Furthermore, by Theorem \ref{T:orlicz1} and \cite[Example 3.10]{Ci3},
\begin{equation*}
W^{m}L^p(\log L)^\alpha(\rn) \to \Lambda ^E (v) (\rn)\,,
\end{equation*}
where $v$ is given by \eqref{v}, and
$$
E(t) \approx \begin{cases} t^p (\log t)^\alpha &\textup{if}\; \;
1\leq p<\frac nm
\\
t^{\frac nm}(\log t)^{\alpha -\frac nm} &\textup{if} \;\; p=\frac
nm, \, \alpha <\frac nm -1
\\
t ^{\frac nm} (\log t)^{-1} (\log (\log t)) ^{-\frac nm}&\textup{if}
\; \; p=\frac nm, \, \alpha =\frac n m-1
\end{cases} \quad  \hbox{near infinity,} \quad E(t) \approx t^p \quad  \hbox{near zero.}
$$
Moreover, the Orlicz-Lorentz space $\Lambda ^E(v)$ is optimal among
all rearrangement-invariant spaces. The optimal
rearrangement-invariant target space when
 either $ p>\frac nm$, or $p=\frac nm$ and $ \alpha
>\frac nm -1$ agrees with the Orlicz space $L^G(\rn)$ with $G$
defined as above, and hence with $(L^\infty \cap L^p(\log L)^\alpha)
(\rn)$.

\end{example}

\bigskip

We conclude this section with an optimal embedding theorem for
Lorentz-Sobolev spaces in $\rn$, which relies upon Theorem
\ref{T:main-higher}.

\begin{theorem}[\textbf{Lorentz-Sobolev embeddings}]\label{T:application-to-lorentz-spaces}
\textup{(i)} Let $m<n$, and either $p=q=1$, or $1<p <\frac nm$ and
$1\leq q\leq\infty$. Then
\begin{equation*}
W\sp mL\sp{p,q}(\mathbb R\sp n)\to\Lambda\sp q(w)(\mathbb R\sp n),
\end{equation*}
where
\[
w(s)=
\begin{cases}
s\sp{\frac1p-\frac1q-\frac mn} & \hbox{\rm if}\;\; s\in(0,1]\\
s\sp{\frac1p-\frac1q} &  \hbox{\rm if}\;\; s\in (1,\infty).
\end{cases}
\]
\\
\textup{(ii)} Let $m<n$ and $1< q\leq\infty$. Then
\begin{equation*}
W\sp mL\sp{\frac nm,q}(\mathbb R\sp n)\to\Lambda\sp q(w)(\mathbb
R\sp n),
\end{equation*}
where
\[
w(s)=
\begin{cases}
s\sp{-\frac1q}(1+\log\frac1s)\sp{-1}  &  \hbox{\rm if}\;\; s\in(0,1]\\
s\sp{\frac mn-\frac1q} &  \hbox{\rm if}\;\; s\in (1,\infty).
\end{cases}
\]
\\
\textup{(iii)} Let either $m>n$, or $m\leq n$ and $p>\frac nm$, or
$m\leq n$, $p=\frac nm$ and $q=1$. Then
\begin{equation*}
W\sp mL\sp{p,q}(\mathbb R\sp n)\to \Lambda\sp{E}(w)(\rn),
\end{equation*}
where
\[
w(s)=
\begin{cases}
1 &  \hbox{\rm if}\ s\in(0,1]\\
s\sp{\frac 1p-\frac1q} &  \hbox{\rm if}\ s\in (1,\infty),
\end{cases}
\]
and
\[
E(t)=
\begin{cases}
t\sp{q} &  \hbox{\rm if}\;\; t\in(0,1]\\
\infty &  \hbox{\rm if}\;\; t\in (1,\infty).
\end{cases}
\]
Moreover, in each case, the target space is  optimal  among all
rearrangement-invariant~spaces.
\end{theorem}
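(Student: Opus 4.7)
The plan is to derive Theorem~\ref{T:application-to-lorentz-spaces} as a consequence of Theorem~\ref{T:main-higher} (and, for the supercritical regime, of Corollary~\ref{T:supercrit}), by carrying out the computation of the optimal target $X_{\opt}^m(\rn)$ explicitly when $X(\rn)=L^{p,q}(\rn)$. The heart of the argument is therefore the identification, up to equivalent norms, of the function norm $\|\cdot\|_{X^m(0,\infty)}$ defined through \eqref{Z} and \eqref{E:definition-of-Y-opt-2-higher}, and of its intersection with the norm of $L^{p,q}(\rn)$ itself.

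I would begin with cases (i) and (ii), which correspond to the subcritical and critical Sobolev regimes. Starting from $\|g\|_{Z'(0,1)}=\|s^{m/n}g^{**}(s)\|_{\widetilde{L^{p,q}}'(0,1)}$, the first step is to use the standard Lorentz duality together with the definition \eqref{tilde} of the localized norm to replace $\|\cdot\|_{\widetilde{L^{p,q}}'(0,1)}$ by the weighted $L^{q'}$-norm $\|h^*(s)\,s^{1/p'-1/q'}\|_{L^{q'}(0,1)}$ (with the obvious modifications when $q\in\{1,\infty\}$). Since $g^{**}$ is already non-increasing, this reduces the computation of $Z'(0,1)$ to an explicit weighted Hardy-type norm applied to $g^*$. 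Dualizing via the characterizations of Lorentz-type norms recalled in Section~\ref{S:preliminaries} (the $\Lambda^q(w)$ and $\Lambda^A(w)$ descriptions), I obtain that $Z(0,1)$ is equivalent to $L^{p^*,q}(0,1)$ in case~(i), where $p^*=np/(n-mp)$, and to the Lorentz--Zygmund-type space with weight $s^{-1/q}(1+\log(1/s))^{-1}$ in case~(ii). Extending by \eqref{E:definition-of-Y-opt-2-higher} gives $\|f\|_{X^m(0,\infty)}=\|f^*(s)\widetilde w(s)\|_{L^q(0,1)}$ with $\widetilde w$ the corresponding local weight.

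Next, I combine this with the $X$-norm via the intersection $X_{\opt}^m(\rn)=(X^m\cap X)(\rn)$ from Theorem~\ref{T:main-higher}. Writing $\|f\|_{L^{p,q}(\rn)}=\|f^*(s)s^{1/p-1/q}\|_{L^q(0,\infty)}$ and summing the two weighted $L^q$-norms (on $(0,1)$ and on $(0,\infty)$) produces a single weighted $L^q$-norm of $f^*$ on $(0,\infty)$, whose weight equals the larger of the two on each half-line. In case~(i) the local weight $s^{1/p^*-1/q}=s^{1/p-1/q-m/n}$ dominates $s^{1/p-1/q}$ on $(0,1)$, while the global weight governs $(1,\infty)$, yielding precisely the piecewise weight $w$ stated in (i). Case~(ii) is analogous but with the logarithmic correction on $(0,1)$. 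In both cases, the identification $X_{\opt}^m(\rn)=\Lambda^q(w)(\rn)$ is then immediate from the definition of $\Lambda^q(w)$, and the required hypotheses for $\Lambda^q(w)$ to be rearrangement-invariant can be checked directly for the stated weights using the Ariño--Muckenhoupt/Sawyer criterion recalled in Section~\ref{S:preliminaries}.

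For case~(iii), the relevant exponents force $\|\chi_{(0,1)}(s)s^{-1+m/n}\|_{X'(0,\infty)}<\infty$ (either because $m\ge n$, or by direct computation of the $L^{p',q'}$-norm of $s^{-1+m/n}\chi_{(0,1)}$ in the remaining supercritical subcases), so Corollary~\ref{T:supercrit} applies and gives $X_{\opt}^m(\rn)=(L^\infty\cap L^{p,q})(\rn)$ as the optimal target. It then remains to verify that this intersection coincides, up to equivalent norms, with the Orlicz--Lorentz space $\Lambda^E(w)(\rn)$ described in the statement: the Young function $E$, being finite with value $t^q$ on $[0,1]$ and equal to $\infty$ on $(1,\infty)$, forces $f^*(s)w(s)\le\lambda$ a.e.\ for any admissible $\lambda$, which on $(0,1]$ (where $w\equiv 1$) gives an $L^\infty$-bound and on $(1,\infty)$ (where $w(s)=s^{1/p-1/q}$) produces the $L^{p,q}$-norm of $f$ restricted to that half-line; the $L^p$-type finiteness of $f$ near infinity is automatic from $f\in L^\infty\cap L^{p,q}$.

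Optimality in each case is inherited from Theorem~\ref{T:main-higher} (or Corollary~\ref{T:supercrit}). The main technical obstacle is the identification of $Z(0,1)$ as a Lorentz or Lorentz--Zygmund space in cases (i) and (ii): this amounts to a careful duality computation for the weighted Hardy operator $g\mapsto s^{m/n}g^{**}(s)$ on Lorentz spaces, which in the critical exponent $p=n/m$ produces the logarithmic correction appearing in the weight $w$, and is the step where all the arithmetic of the exponents has to be tracked precisely.
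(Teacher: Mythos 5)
Your proposal is correct and follows essentially the same route as the paper: both apply Theorem \ref{T:main-higher} with $X(\rn)=L^{p,q}(\rn)$, identify the local norm $\|\cdot\|_{Z(0,1)}$ arising from \eqref{Z} (a Lorentz norm in case (i), a Lorentz--Zygmund norm with the logarithmic weight in case (ii), and $L^\infty(0,1)$ in case (iii)), and then merge it with the global $L^{p,q}$ norm into the stated $\Lambda^q(w)$ or $\Lambda^E(w)$ norm, optimality being inherited from the main theorem. The only differences are minor: the paper obtains the explicit form of $\|\cdot\|_{Z(0,1)}$ by citing the computation in \cite[Proof of Theorem~5.1]{CP-traces} rather than redoing the duality argument you sketch (which is indeed the delicate step, especially the logarithmic weight at $p=\frac nm$), and it treats case (iii) through the same formula $Z(0,1)=L^\infty(0,1)$ instead of invoking Corollary \ref{T:supercrit} via a direct verification of condition \eqref{conv}, as you do; the two ways of handling the supercritical case are equivalent.
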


\section{Proofs of the main results}\label{S:proofs}
 Let $\|\cdot \|_{X(0,\infty)}$ be a rearrangement-invariant function norm, and let $\|\cdot \|_{{\widetilde X}(0,1)}$ be its localized rearrangement-invariant function norm on $(0,1)$ given  by \eqref{tilde}.
 The rearrangement-invariant function norm   $\|\cdot \|_{Z(0,1)}$
defined as in \eqref{Z} is the optimal one for which the Hardy type
inequality
\begin{equation}\label{hardyopt}
\bigg\| \int _s^1 f(r) r^{-1+\frac mn}\, dr \bigg\|_{Z(0,1)} \leq C
\|f\|_{\widetilde X(0,1)}
\end{equation}
holds with some positive constant $C$ independent of nonnegative
functions $f \in \widetilde X(0, 1)$, see \cite[Theorem~A]{T2}.
 The following
 proposition tells us that such a norm  is always at least as strong as that of ${\widetilde X}(0,1)$.

\begin{proposition}\label{ZX} Let $n\geq 2$ and let $m\in \mathbb N$. Assume
that  $\| \cdot \|_{X(0,\infty)}$ is a rearrangement-invariant
function norm, and let $\| \cdot \|_{Z(0,1)}$ be
rearrangement-invariant function norm defined as in \eqref{Z}. Then
\begin{equation}\label{a1}
Z(0,1) \to {\widetilde X}(0,1).
\end{equation}
\begin{proof}
Embedding \eqref{a1} is equivalent to
\begin{equation}\label{a2}
{\widetilde X}'(0,1) \to Z'(0,1).
\end{equation}
By the very definition of $Z(0,1)$, embedding \eqref{a2} is in turn
equivalent to the inequality
\begin{equation}\label{a3}
\|s\sp{\frac mn}f\sp{**}(s)\|_{\widetilde X'(0,1)}\leq C
\|f\|_{{\widetilde X}'(0,1)}
\end{equation}
for some constant $C$ and for every $f \in \mathcal M_+(0,1)$. It is
easily verified that the operator
$$ \mathcal M_+(0,1) \ni f(s) \mapsto s\sp{\frac mn}f\sp{**}(s) \in \mathcal M_+(0,1) $$
is sublinear, and bounded in $L^1(0,1)$ and in $L^\infty (0,1)$. An
interpolation theorem by Calder\'on \cite[Theorem 2.12, Chapter
3]{BS} then tells us that it is bounded in any
rearrangement-invariant space. Hence, inequality \eqref{a3} follows.
\end{proof}

\end{proposition}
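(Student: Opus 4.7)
The plan is to prove the embedding $Z(0,1) \to \widetilde{X}(0,1)$ by duality. Recall that for rearrangement-invariant spaces, an embedding $Z(0,1) \to \widetilde{X}(0,1)$ holds if and only if the reverse embedding holds between the associate spaces, namely $\widetilde{X}'(0,1) \to Z'(0,1)$, with the same norm constant. So I would switch to proving this associate-space embedding instead.

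Unraveling the definition \eqref{Z} of the norm $\|\cdot\|_{Z'(0,1)}$, the embedding $\widetilde{X}'(0,1) \to Z'(0,1)$ is equivalent to the pointwise-type inequality
\begin{equation*}
\|s^{\frac{m}{n}} f^{**}(s)\|_{\widetilde{X}'(0,1)} \leq C \|f\|_{\widetilde{X}'(0,1)}
\end{equation*}
for all $f \in \mathcal{M}_+(0,1)$ and some constant $C$ independent of $f$. Thus the whole problem reduces to establishing that the operator $T\colon f \mapsto s^{m/n} f^{**}(s)$ is bounded on the rearrangement-invariant space $\widetilde{X}'(0,1)$.

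To do this I would use an interpolation argument, since $T$ is sublinear (this follows from the subadditivity of $f \mapsto f^{**}$). The natural candidates for endpoints are $L^1(0,1)$ and $L^\infty(0,1)$. Boundedness on $L^\infty(0,1)$ is immediate: for $s \in (0,1)$, $s^{m/n} f^{**}(s) \leq f^{**}(s) \leq \|f\|_{L^\infty(0,1)}$. Boundedness on $L^1(0,1)$ follows from Fubini, since
\begin{equation*}
\int_0^1 s^{\frac{m}{n}} f^{**}(s)\,ds = \int_0^1 f^*(r) \int_r^1 s^{\frac{m}{n}-1}\,ds\,dr \leq \tfrac{n}{m}\|f\|_{L^1(0,1)}.
\end{equation*}

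With these two endpoint estimates and the sublinearity of $T$ in hand, I would invoke Calder\'on's interpolation theorem (as formulated, e.g., in \cite[Ch.~3, Thm.~2.12]{BS}), which asserts that any sublinear operator simultaneously bounded on $L^1$ and $L^\infty$ is bounded on every rearrangement-invariant space over the same underlying measure space. Applying this to $\widetilde{X}'(0,1)$ yields the desired inequality for $T$, hence the associate embedding, and finally the claimed embedding $Z(0,1) \to \widetilde{X}(0,1)$. The only delicate point I would double-check is the verification that $T$ genuinely maps $\widetilde{X}'(0,1)$ (which is a rearrangement-invariant space on $(0,1)$, not on $(0,\infty)$) into itself so that Calder\'on's theorem applies directly; this is not a real obstacle because the whole discussion takes place intrinsically on $(0,1)$.
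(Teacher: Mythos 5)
Your proposal is correct and follows essentially the same route as the paper: pass to associate spaces via \eqref{E:equivemb}, reduce to the boundedness of the sublinear operator $f \mapsto s^{m/n}f^{**}(s)$ on $\widetilde{X}'(0,1)$, and conclude by Calder\'on's interpolation theorem after checking the $L^1(0,1)$ and $L^\infty(0,1)$ endpoint bounds (which the paper leaves as "easily verified" and you spell out).
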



The next auxiliary result will be critical in the proof of the
sharpness of our embeddings. Throughout, we shall use the relation
$\gtrsim$ [$\lesssim$] between two expressions to denote that the
former bounds [is bounded by] the latter, up to a positive constant.
The notation $\simeq$ is adopted to denote that the relations
$\gtrsim$ and $\lesssim$ hold simultaneously. Notice the different
meanings of the relation $\simeq$ and the relation $\approx$ of
equivalence between Young functions introduced in Section
\ref{S:preliminaries}.

\begin{proposition}\label{P:1} Let
$\|\cdot\|_{X(0,\infty)}$ and $\|\cdot\|_{S(0,\infty)}$ be
rearrangement-invariant function norms. Let $\alpha>0$ and let
$L>0$. Assume that
\begin{equation}\label{E:1.23_bis}
\left\|\chi_{(L,\infty)}(s)\int_s\sp{\infty}f(r)r\sp{\alpha-1}\,dr\right\|_{S(0,\infty)}
\leq C_1
\left(\|f\|_{X(0,\infty)}+\left\|\int_s\sp{\infty}f(r)r\sp{\alpha-1}\,dr\right\|_{X(0,\infty)}\right)
\end{equation}
for some constant $C_1$, and for every non-increasing function
$f\in\M_+(0,\infty)$. Then there exists a constant $C_2$ such that
\begin{equation}\label{E:assertion-of-proposition_bis}
\|g\sp*\chi_{(2L,\infty)}\|_{S(0,\infty)} \leq
C_2\|g\|_{X(0,\infty)}
\end{equation}
for every $g\in\M_+(0,\infty)$.
\end{proposition}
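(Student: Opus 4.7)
My plan is to apply the hypothesis \eqref{E:1.23_bis} to a carefully chosen non-increasing test function built from $g$, and convert the resulting estimate into the desired tail embedding via a dilation argument for rearrangement-invariant norms. By the rearrangement-invariance of $\|\cdot\|_X$ and $\|\cdot\|_S$, I may reduce at once to the case when $g = g^*$ is already non-increasing.

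The natural candidate is $f(r)=g(r)/r^{\alpha}$, a product of two non-increasing nonnegative functions, hence non-increasing. The elementary pointwise lower bound
\[
Tf(s) \,:=\, \int_s^{\infty}f(r)\,r^{\alpha-1}\,dr \,=\, \int_s^{\infty}\frac{g(r)}{r}\,dr \,\geq\, g(2s)\log 2, \qquad s>0,
\]
follows from the monotonicity of $g$ on $(s,2s)$. Setting $h(s):=g(2s)\,\chi_{(L,\infty)}(s)$, one checks $g\chi_{(2L,\infty)}=E_{2}h$, so the dilation inequality $\|E_{2}h\|_S\leq\|h\|_S$ yields
\[
\|g\,\chi_{(2L,\infty)}\|_S \,\leq\, \|h\|_S \,\leq\, \tfrac{1}{\log 2}\,\|\chi_{(L,\infty)}\,Tf\|_S \,\leq\, \tfrac{C_1}{\log 2}\bigl(\|f\|_X + \|Tf\|_X\bigr),
\]
where the last inequality is \eqref{E:1.23_bis}.

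The hard part will be bounding $\|f\|_X+\|Tf\|_X$ by $\|g\|_X$. Since $r^{-\alpha}$ is singular near zero, I would replace $f$ by the clipped variant $\widetilde f(r)=\min\bigl(g(r)/r^{\alpha},\,g(L/2)\,(L/2)^{-\alpha}\bigr)$, which is still non-increasing, agrees with $g(r)/r^{\alpha}$ on $(L/2,\infty)$, and equals the constant $K:=g(L/2)(L/2)^{-\alpha}$ on $(0,L/2)$. The pointwise bound $g(L/2)\,\varphi_X(L/2)\leq\|g\|_X$ for non-increasing $g$ (immediate from $g(L/2)\chi_{(0,L/2)}\leq g$) together with property~(P4) delivers $\|\widetilde f\|_X\leq C_{L,\alpha}\|g\|_X$. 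Controlling $\|T\widetilde f\|_X$ is the real technical obstacle, because $T\widetilde f$ carries a Hardy-type integral $\int_s^{\infty}g(r)/r\,dr$ whose $X$-norm is not universally controlled by $\|g\|_X$ for arbitrary rearrangement-invariant $X$. My plan is to circumvent this by approximating $\widetilde f$ by the compactly supported truncations $\widetilde f_N = \widetilde f\,\chi_{(0,N)}$ (for which $T\widetilde f_N$ has compact support and its $X$-norm is finite by~(P4)), applying the argument above to each $N$, and letting $N\to\infty$ via the monotone-convergence property~(P3); the required uniform control on $\|T\widetilde f_N\|_X$ will likely come from a dyadic level-set decomposition of $g$ combined with the fundamental-function estimate $\varphi_S(T)\lesssim\varphi_X(T)$ for large $T$, itself a consequence of testing \eqref{E:1.23_bis} with $f=c\,\chi_{(0,T)}$.
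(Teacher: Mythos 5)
Your reductions at the start are fine (passing to $g=g^*$, the pointwise bound $\int_s^\infty g(r)\,dr/r\ge g(2s)\log 2$, and the dilation step), but the proof collapses exactly where you say the ``real technical obstacle'' is, and the fix you sketch cannot work. With your test function $f(r)=g(r)r^{-\alpha}$ (clipped or not), the right-hand side of \eqref{E:1.23_bis} contains $\bigl\|\int_s^\infty g(r)\,dr/r\bigr\|_{X(0,\infty)}$, so you would need this Hardy-type operator to be bounded on an \emph{arbitrary} rearrangement-invariant space $X$, which is false (it already fails on $L^\infty$). Your truncation device does not restore a uniform bound: take $X=L^\infty$ and $g=\chi_{(0,N)}$; then $\|g\|_{X}=1$ while $\|T\widetilde f_N\|_{L^\infty}\simeq\log N\to\infty$, so no constant independent of $N$ exists and the limit via (P3) never gets off the ground. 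The auxiliary estimate $\varphi_S(T)\lesssim\varphi_X(T)$ obtained by testing with $f=c\,\chi_{(0,T)}$ is true but only controls characteristic functions; summing a dyadic level-set decomposition of $g$ produces the Lorentz-type quantity $\sum_k g^*(2^k)\varphi_X(2^k)$, which dominates, rather than is dominated by, $\|g\|_{X}$ in general. So the missing step is not a technicality: the chosen test function is the wrong one.

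The paper's proof avoids any Hardy-operator bound on $X$ by a different choice of trial functions. First, for non-increasing $f$ that are \emph{constant on} $(0,2L)$, the elementary lower bounds for $\int_s^\infty f(r)r^{\alpha-1}\,dr$ on $(2L,\infty)$ and on $(0,L)$ show $\|f\|_{X(0,\infty)}\lesssim\bigl\|\int_s^\infty f(r)r^{\alpha-1}\,dr\bigr\|_{X(0,\infty)}$, so hypothesis \eqref{E:1.23_bis} self-improves to an inequality with only $\bigl\|\int_s^\infty f(r)r^{\alpha-1}\,dr\bigr\|_{X(0,\infty)}$ on the right. Then one inserts $f=\sum_i a_i\chi_{(0,b_i)}$ with $b_i\ge 2L$: for such $f$ the integral $\int_s^\infty f(r)r^{\alpha-1}\,dr$ is comparable from above in $X$, and from below in $S$ on $(L,\infty)$ (after a dilation), to the arbitrary non-increasing step function $\sum_i a_i b_i^\alpha\chi_{(0,b_i)}$, which is constant on $(0,2L)$. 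This yields $\|\chi_{(2L,\infty)}g\|_{S}\lesssim\|g\|_{X}$ for all non-increasing $g$ constant on $(0,2L)$ by approximation, and the general case follows by applying this to $g^*(2L)\chi_{(0,2L)}+g^*\chi_{(2L,\infty)}$. If you want to salvage your write-up, you must replace your test function by these step functions (or an equivalent device); as it stands the argument has a genuine gap.
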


 \begin{proof}
Let $f$ be as in the statement. Assume, in addition, that $f$ is
constant on $(0,2L)$, and denote by $f_0\in \R$ the constant value
of $f$ on $(0, 2L)$. If $s\geq 2L$, then
\begin{equation*}
\int_s^{\infty} f(r) r^{\alpha -1} \; dr \geq \int_s^{2s} f(r)
r^{\alpha -1} \; dr \geq f(2s) \int_s^{2s} r^{\alpha -1} \; dr =
(2^{\alpha} -1) \frac {s^\alpha}{\alpha} f(2s)\geq (2^{\alpha} -1)
\frac{(2L)^\alpha}{\alpha} f(2s)\,.
\end{equation*}
Furthermore, if $s\in \left(0, L\right)$, then
\begin{equation*}
\int_s^{\infty} f(r) r^{\alpha -1} \; dr \geq \int_{L}^{2L} f(r)
r^{\alpha -1} \; dr = \frac 1\alpha \left[(2L)^\alpha - L^\alpha
\right] f_0\,.
\end{equation*}
Thus,
\begin{align}\label{E:1.24_bis}
\|f(s)\|_{X(0, \infty)}& \lesssim \|f(2s)\|_{X(0, \infty)}\leq
\|f(2s) \chi_{(2L, \infty)}(s)\|_{X(0, \infty)} + \|f(2s) \chi_{(0,
2L)}(s)\|_{X(0, \infty)}
\\
\cr & \lesssim \bigg \|\chi_{(2L, \infty)} (s) \int_s^\infty f(r)
r^{\alpha -1} \; dr \bigg \|_{X(0, \infty)} + f_0 \big \| \chi_{(0,
2L)} \big\|_{X(0, \infty)} \nonumber
\\
\cr & \lesssim \bigg \|\chi_{(2L, \infty)} (s) \int_s^\infty f(r)
r^{\alpha -1} \; dr \bigg \|_{X(0, \infty)} + f_0 \big \| \chi_{(0,
L)} \big\|_{X(0, \infty)} \nonumber
\\
\cr & \lesssim \bigg \|\chi_{(2L, \infty)} (s) \int_s^\infty f(r)
r^{\alpha -1} \; dr \bigg \|_{X(0, \infty)} + \bigg \| \chi_{(0,
L)}(s) \int_s^\infty f(r) r^{\alpha -1} \; dr \bigg\|_{X(0, \infty)}
\nonumber
\\
\cr & \lesssim \bigg \|\int_s^\infty f(r) r^{\alpha -1} \; dr \bigg
\|_{X(0, \infty)} \,.\nonumber
\end{align}
Inequalities \eqref{E:1.23_bis} and \eqref{E:1.24_bis} imply that
\begin{equation}\label{E:1.25_bis}
\bigg \| \chi_{(L, \infty)}(s) \int_s^\infty f(r) r^{\alpha -1} \;
dr\bigg\|_{S(0, \infty)} \lesssim \bigg \| \int_s^\infty f(r)
r^{\alpha -1} \; dr\bigg\|_{X(0, \infty)}\,.
\end{equation}
Consider functions $f$ of the form
\begin{equation*}
f=\sum_{i=1}^k a_i \chi(0, b_i) \qquad \hbox{with $a_i\geq 0$ \;\;
and \;\; $b_i \geq 2L \,.$}
\end{equation*}
For this choice of $f$, one has that
\begin{equation*}
\int_s^\infty f(r) r^{\alpha -1}\; dr  = \sum_{i=1}^k \chi_{(0,
b_i)} (s) \,\frac{a_i}\alpha \,\left( b_i^\alpha - s^\alpha\right)
\qquad \hbox{for $s>0$},
\end{equation*}
whence
\begin{equation*}
\bigg \|\int_s^\infty f(r) r^{\alpha -1}\; dr\bigg\|_{X(0, \infty)}
\lesssim\bigg\|\sum_{i=1}^k \chi_{(0, b_i)} (s) \, a_i b_i^\alpha
\bigg\|_{X(0, \infty)}\,.
\end{equation*}
On the other hand,
\begin{align}\label{E:1.27_bis}
&\bigg\|\chi_{(L, \infty)}(s)\int_s^\infty f(r) r^{\alpha -1}\;
dr\bigg\|_{S(0, \infty)} \gtrsim \bigg\|\chi_{(L,
\infty)}(s)\sum_{i=1}^k \chi_{\big(0, \frac{b_i}2\big)} (s) a_i
(b_i^\alpha - s^\alpha)\bigg\|_{S(0, \infty)}
\\
\cr & \gtrsim \bigg\|\chi_{(L, \infty)}(s)\sum_{i=1}^k \chi_{\big(0,
\frac{b_i}2\big)} (s) a_i \left(b_i^\alpha - \left(
{b_i}/2\right)^\alpha\right)\bigg\|_{S(0, \infty)} \gtrsim
\bigg\|\chi_{(L, \infty)}(s)\sum_{i=1}^k \chi_{\big(0,
\frac{b_i}2\big)} (s) a_i b_i^\alpha\bigg\|_{S(0, \infty)}\nonumber
\\
\cr & \simeq \bigg\|\chi_{(L, \infty)}\big( s/2\big)\sum_{i=1}^k
\chi_{\big(0, \frac{b_i}2 \big)} \left( s/2\right) a_i
b_i^\alpha\bigg\|_{S(0, \infty)}\simeq \bigg\|\chi_{(2L,
\infty)}(s)\sum_{i=1}^k \chi_{(0, b_i)} (s) a_i
b_i^\alpha\bigg\|_{S(0, \infty)}\, .\nonumber
\end{align}
Inequalities \eqref{E:1.25_bis} and \eqref{E:1.27_bis} imply, via an
approximation argument, that
\begin{equation}\label{E:1.28_bis}
\|\chi_{(2L, \infty)}\, g \|_{S(0, \infty)} \lesssim \|g\|_{X(0,
\infty)}
\end{equation}
for every non-increasing function $g\in \mathcal{M}_+ (0, \infty)$
that is constant on $(0, 2L)$. Observe that property \textup{(P3)}
of rearrangement-invariant function norms plays a role in the
approximation in question.
\par
Assume now that $g$ is any function in $\mathcal{M}_+ (0, \infty)$.
An application of \eqref{E:1.28_bis} with $g$ replaced by $g^* (2L)
\chi_{(0, 2L)} + g^* \chi_{(2L, \infty)}$ yields
\begin{align}\label{E:1.29_bis}
\big\| \chi_{(2L, \infty)} g^* \big\|_{S(0, \infty)} & = \big\|
\chi_{(2L, \infty)} g^* (2L) \chi_{(0, 2L)} + g^* \chi_{(2L,
\infty)}\big\|_{S(0, \infty)}
\\
\cr & \lesssim \big\| g^* (2L) \chi_{(0, 2L)} + g^* \chi_{(2L,
\infty)}\big\|_{X(0, \infty)}\leq \|g^*\|_{X(0, \infty)} =
\|g\|_{X(0, \infty)}\,. \nonumber
\end{align}
Hence, inequality \eqref{E:assertion-of-proposition_bis} follows.
\end{proof}


A key step towards  the general embedding theorem  of Theorem
\ref{T:main-higher} is a first-order embedding, for somewhat more
general non-homogeneous Sobolev type spaces defined as follows.
Given two rearrangement-invariant function norms
$\|\cdot\|_{X(0,\infty)}$ and $\|\cdot\|_{Y(0,\infty)}$, we define
the space $W\sp 1(X,Y)(\rn)$ as
$$W^1(X,Y)(\rn) = \{ u\in X(\rn): u \;\; \hbox{is weakly differentiable in $\rn$, and  $|\nabla u|\in
Y(\rn)$}\}$$
endowed with the norm
$$
\|u\|_{W^1(X,Y)(\rn)} = \|u\|_{X(\rn)} +\|\nabla u\|_{Y(\rn)}.
$$

\smallskip

\begin{theorem}[\textbf{Non-homogeneous first-order  Sobolev embeddings}]\label{T:first-order}
Let $\|\cdot\|_{X(0,\infty)}$ and $\|\cdot\|_{Y(0,\infty)}$ be
rearrangement-invariant function norms. Let
$\|\cdot\|_{Y\sp{1}(0,\infty)}$ be defined as
in~\eqref{E:definition-of-Y-opt-2-higher}, with $m=1$ and $X(\rn)$
replaced by $Y(\rn)$. Then
\begin{equation}\label{E:sobolev-opt}
W\sp{1}(X,Y)(\rn)\hra (Y^1\cap X)(\rn)\,.
\end{equation}
\end{theorem}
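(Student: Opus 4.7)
The plan is to reduce the embedding to the known optimal Hardy inequality \eqref{hardyopt} by splitting $u$ into a "peak" part of finite measure support and a bounded "tail" part, then estimating each piece in the appropriate norm. The $X$-component of the target is free (it is built into the norm on $W^1(X,Y)(\rn)$), so the entire task is to establish
\[
\|u\|_{Y^1(\rn)} = \|u^*\|_{Z(0,1)} \lesssim \|\nabla u\|_{Y(\rn)} + \|u\|_{X(\rn)}.
\]

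Set $t_0 = u^*(1)$ and define the truncation
\[
u_1 = (|u|-t_0)_+ \operatorname{sign}(u), \qquad u_2 = u - u_1.
\]
The standard chain rule for weakly differentiable functions gives $|\nabla u_1| = |\nabla u|\chi_{\{|u|>t_0\}}$ and $|\nabla u_2| = |\nabla u|\chi_{\{|u|\leq t_0\}}$. The defining property of $u^*$ yields $|\{|u|>t_0\}|\leq 1$, so both $u_1$ and $\nabla u_1$ are supported on a set of measure at most $1$. Moreover $|u_2|\leq t_0$ a.e., and the pointwise bound $|u|\leq |u_1|+t_0$ gives the crucial rearrangement inequality $u^*(s)\leq u_1^*(s)+t_0$.

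For $u_1$, since it has support of measure $\leq 1$ and vanishes at infinity, the classical Pólya–Szegő–Maz'ya pointwise rearrangement inequality (Talenti-type) yields
\[
u_1^*(s) \leq \frac{1}{n\omega_n^{1/n}}\int_s^1 |\nabla u_1|^*(r)\, r^{-1+1/n}\,dr \qquad \textup{for }s\in(0,1).
\]
Because $|\nabla u_1|^*$ vanishes on $(1,\infty)$, its $\widetilde Y(0,1)$-norm equals $\|\nabla u_1\|_{Y(\rn)}$. Applying the optimal Hardy inequality \eqref{hardyopt} (with $X$ in \eqref{hardyopt} playing the role of our $Y$, $m=1$) then gives
\[
\|u_1\|_{Y^1(\rn)} = \|u_1^*\|_{Z(0,1)} \lesssim \|\nabla u_1\|_{Y(\rn)} \leq \|\nabla u\|_{Y(\rn)}.
\]
Next, combining the pointwise rearrangement bound $u^*(s)\leq u_1^*(s)+t_0$ with the triangle inequality for $\|\cdot\|_{Z(0,1)}$ and the trivial estimate $t_0 = u^*(1) \leq \|u\|_{X(\rn)}/\varphi_X(1)$ (from the monotonicity of $u^*$ together with $\|\chi_{(0,1)}\|_{X(0,\infty)} = \varphi_X(1) \in (0,\infty)$ by (P1) and (P4)), we obtain
\[
\|u\|_{Y^1(\rn)} \leq \|u_1^*\|_{Z(0,1)} + t_0\,\|\chi_{(0,1)}\|_{Z(0,1)} \lesssim \|\nabla u\|_{Y(\rn)} + \|u\|_{X(\rn)},
\]
which combined with $\|u\|_{X(\rn)}\leq \|u\|_{W^1(X,Y)(\rn)}$ proves \eqref{E:sobolev-opt}.

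The main technical obstacle is ensuring that the Talenti-type pointwise inequality applies to $u_1$. Although $u_1$ has support of measure at most $1$, this support need not be a bounded open set, and $u_1$ is not a priori in $W^{1,1}_0$ of any nice domain. One needs to verify, likely by approximating $u_1$ by smooth functions with support in a slight enlargement of $\{|u|>t_0\}$, that the sharp rearrangement inequality still holds. Once that regularity/approximation point is settled, the optimality of $Z$ in \eqref{hardyopt} and the boundedness of the tail part make the remaining steps routine.
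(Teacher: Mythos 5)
Your overall strategy (truncate $u$ at a level $t_0$ controlled by $\|u\|_{X(\rn)}$, estimate the part supported on a set of measure at most one via the one-dimensional inequality \eqref{hardyopt}, and absorb the bounded remainder through $t_0\,\|\chi_{(0,1)}\|_{Z(0,1)}$ with $t_0=u^*(1)\leq \|u\|_{X(\rn)}/\varphi_X(1)$) is sound and parallels the paper's proof. The genuine gap is the key pointwise inequality you invoke:
\begin{equation*}
u_1^*(s) \leq \frac{1}{n\omega_n^{1/n}}\int_s^1 |\nabla u_1|^*(r)\, r^{-1+\frac 1n}\,dr ,
\end{equation*}
with the plain rearrangement $|\nabla u_1|^*$ on the right-hand side, is false -- and not because of any lack of regularity of the support, which is the point you flag. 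Take $u_1$ equal to $1$ on a ball of measure $s_0<1$, decreasing to $0$ across a thin annulus of measure $\ve<s_0$, and vanishing outside: then $|\nabla u_1|^*(r)=0$ for $r\geq\ve$, so for $\ve<s<s_0$ the right-hand side is $0$ while $u_1^*(s)=1$. The correct Talenti--Maz'ya type pointwise bound carries $|\nabla u_1|^{**}$ instead of $|\nabla u_1|^*$, but that repaired version cannot be fed into \eqref{hardyopt}: the optimality of $Z(0,1)$ concerns the operator $f\mapsto\int_s^1 f(r)r^{-1+\frac1n}\,dr$ acting on $f$ itself, and the analogous inequality with $f^{**}$ may fail. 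For instance, for $Y(\rn)=L^1(\rn)$ one has $Z(0,1)=L^{n',1}(0,1)$, while for $f$ concentrated near the origin with $\|f\|_{L^1(0,1)}=1$ one gets $\int_s^1 f^{**}(r)r^{-1+\frac1n}\,dr\simeq s^{-\frac1{n'}}$, which does not belong to $L^{n',1}(0,1)$.

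This is precisely the step the paper handles differently: by the P\'olya--Szeg\"o principle \eqref{PS}, $u^*$ is locally absolutely continuous and $\|\nabla u\|_{Y(\rn)}\gtrsim \|s^{\frac1{n'}}{u^*}'(s)\|_{Y(0,\infty)}$; since a function with support of measure at most $1$ has $u^*(1)=0$, one writes $u^*(s)=\int_s^1\big[r^{\frac1{n'}}(-{u^*}'(r))\big]\,r^{-1+\frac1n}\,dr$ and applies \eqref{hardyopt} to $f(r)=r^{\frac1{n'}}(-{u^*}'(r))$. This is admissible because \eqref{hardyopt} holds for all nonnegative $f$, not only non-increasing ones, which matters since $r^{\frac1{n'}}(-{u^*}'(r))$ need not be monotone; this yields exactly the Poincar\'e-type inequality \eqref{reduction1} for functions with support of measure at most $1$. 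If you replace your pointwise lemma by this argument, the rest of your proof -- the chain rule for the truncation, the bound on $t_0$, the inequality $u^*\leq u_1^*+t_0$, and the triangle inequality in $Z(0,1)$ -- goes through and gives \eqref{E:sobolev-opt}.
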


\begin{proof}
We begin by showing that there exists a constant $C$ such that
\begin{equation}\label{reduction1}
\|u\|_{Y^1(\rn)}\leq C \|\nabla u\|_{Y(\rn)}
\end{equation}
for every function $u$ such that $|{\rm supp} u| \leq 1$ and
$|\nabla u| \in Y(\rn)$.
Indeed, a general form of the P\'olya-Szeg\"o principle on the
decrease of gradient norms under spherically symmetric
symmetrization ensures that, for any such function $u$, the
decreasing rearrangement $u^*$ is locally absolutely continuous in
$(0,\infty)$, and
\begin{equation}\label{PS}
\|\nabla u\|_{Y(\rn)} \geq n \omega _n^{\frac 1n}\big\|s^{\frac
1{n'}}{u^*}'(s)\big\|_{Y(0,\infty)}\,,
\end{equation}
where $\omega _n$ denotes the Lebesgue measure of the unit ball in
$\rn$ -- see e.g. \cite[Lemma 4.1]{CParkiv}. Moreover, there exists
a constant $C$ such that
\begin{align}\label{may21}
\big\|s^{\frac 1{n'}}{u^*}'(s)\big\|_{Y(0,\infty)} = \big\|s^{\frac
1{n'}}{u^*}'(s)\big\|_{\widetilde Y(0,1)} \geq C \bigg\|\int _s^1
{u^*}'(r)\, dr\bigg\|_{Z(0,1)} = C \|u^*(s)\|_{Z(0,1)} =
\|u\|_{Y^1(\rn)},
\end{align}
where the first equality holds since $u^*$ vanishes in $(1,
\infty)$, the inequality is a consequence of the Hardy type
inequality \eqref{hardyopt}, the second equality holds since
$u^*(1)=0$, and the last inequality by the very definition of the
norm in $Y^1(\rn)$. Inequality \eqref{reduction1} is a consequence
of \eqref{PS} and \eqref{may21}.
\\ Now, let $u\in W^1 (X,Y)(\rn)$ such that
\begin{equation}\label{7a}
\|u\|_{W\sp1(X, Y)(\rn)} \leq 1.
\end{equation}
Then,
\begin{equation}\label{E:3}
1\geq \|u\|_{X(\rn)}\geq t \,\|\chi_{\{|u|>t\}}\|_{X(\rn)}= t\,
\varphi_X (|\{|u|>t\}|)\qquad\textup{for}\ t>0,
\end{equation}
where $\varphi _X$ is the fundamental function of $\|\cdot\|_{X(0,
\infty)}$ defined as in \eqref{fund}. Let $\varphi_X^{-1}$ denote
its generalized left-continuous inverse. One has that
\begin{equation*}
\lim _{t \to 0^+} \varphi_X^{-1} (t) =0,
\end{equation*}
and
\begin{equation*}
\varphi_X^{-1} (\varphi _X (t)) \geq t \qquad \hbox{for $t>0$.}
\end{equation*}
Therefore, by \eqref{E:3},
$$
|\{|u|>t\}|\leq \varphi_X^{-1}\big(\varphi _X (|\{|u|>t\}|)\big)
\leq \varphi_X^{-1}(1/t)\qquad\textup{for}\ t>0.
$$
 Hence,
\[
|\{|u|>t\}|\leq \varphi_X^{-1}(1/t)\qquad\textup{for}\ t>0.
\]
Choose $t_0$ such that $\varphi^{-1}_X( 1/{t_0})\leq 1$, whence
\[
|\{|u|>t_0\}|\leq 1.
\]
Let us decompose the function $u$ as $u=u_1 + u_2$, where
$u_1=\operatorname{sign}(u)\min\{|u|,t_0\}$ and $u_2= u-u_1$. By
standard properties of truncations of Sobolev functions, we have
that $u_1, u_2 \in W^1(X,Y)$. Moreover, $\|u_1\|_{L^\infty
(\rn)}\leq t_0$, and $|\{|u_2|>0\}|=|\{|u|>t_0\}|\leq 1$. We claim
that there exist positive constants $c_1$ and $c_2$ such that
\begin{align}
\|u_1\|_{Y\sp1(\rn)}\leq c_1,\label{E:4a}
\end{align}
and
\begin{align}
\|u_2\|_{{Y}\sp1(\rn)}\leq c_2.\label{E:4b}
\end{align}
Inequality \eqref{E:4a} is a consequence of the fact that
\[
\|u_1\|_{Y\sp1(\rn)}\leq t_0\,\|1\|_{Y\sp1(\rn)}<\infty\,,
\]
where the last inequality holds thanks to the definition of the norm
$\|\cdot\|_{Y\sp1(\rn)}$.
Inequality\eqref{E:4b} follows from inequalities \eqref{reduction1}
and \eqref{7a}.
From  \eqref{E:4a}, \eqref{E:4b} and \eqref{7a} we infer that
\begin{align*}
\|u\|_{X(\rn)}+\|u\|_{{Y}\sp1(\rn)} &\leq
\|u\|_{X(\rn)}+\|u_1\|_{{Y}\sp1(\rn)} +\|u_2\|_{{Y}\sp1(\rn)} \leq
\|u\|_{X(\rn)} + c_1+c_2 \leq 1+c_1+c_2
\end{align*}
for every function $u$ fulfilling \eqref{7a}. This establishes
embedding \eqref{E:sobolev-opt}.
\end{proof}

We are now in a position the accomplish the proof of
Theorem~\ref{T:main-higher}.

\begin{proof}[Proof of Theorem~\ref{T:main-higher}]
We shall prove by induction on $m$ that
\begin{equation}\label{E:induction}
\|u\|_{X^m(\Rn)}\leq C \|u\|_{W^mX(\Rn)}
\end{equation}
for some constant $C$, and for every $u\in W^mX (\Rn)$. This
inequality, combined with the trivial embedding $W^mX(\Rn)
\rightarrow X(\Rn)$,  yields embedding~\eqref{E:sobolev-opt-higher}.
\\
If $m=1$, then inequality~\eqref{E:induction} is a straightforward
consequence of Theorem~\ref{T:first-order} applied in the special
case when $Y(\Rn)=X(\Rn)$. Now, assume that~\eqref{E:induction} is
fulfilled for some $m\in \mathbb N$. Let $u\in W^{m+1}X(\Rn)$. By
induction assumption applied to the function $u_{x_i}$ for
$i=1,\dots,n$,
$$
\|u_{x_i}\|_{X^m(\Rn)}\leq C\|u_{x_i}\|_{W^mX(\Rn)} \leq
C'\|u\|_{W^{m+1}X(\Rn)}
$$
for some constants $C$ and $C'$. Consequently,
\begin{equation}\label{april3}
\|\nabla u\|_{X^m(\Rn)}\leq  C\|u\|_{W^{m+1}X(\Rn)}
\end{equation}
for some constant $C$. From Theorem~\ref{T:first-order} with
$Y(\Rn)=X^m(\Rn)$, and inequality \eqref{april3}, we obtain
\begin{equation}
\label{april4} \|u\|_{(X^m)^1(\Rn)} \leq
C\left(\|u\|_{X(\Rn)}+\|\nabla u\|_{X^m(\Rn)}\right) \leq
C'\|u\|_{W^{m+1}X(\Rn)}
\end{equation}
for some constants $C$ and $C'$. From~\cite[Corollary~9.6]{CPS} one
can deduce that
\begin{equation}
\label{april5} \|u\|_{(X^m)^1(\Rn)} \simeq \|u\|_{X^{m+1}(\Rn)}
\end{equation}
 for every $u\in W^{m+1}X(\Rn)$.  Inequality~\eqref{E:induction}, with $m$ replaced by $m+1$, follows from \eqref{april4} and \eqref{april5}.
\\
It remains to prove the optimality of the space $( X\sp m\cap
X)(\rn)$. Assume that $S(\rn)$ is another rearrangement-invariant
space such that
\[
W\sp{m}X(\rn)\hra S(\rn).
\]
Then there exists a constant $C$ such that
\begin{equation}\label{E:10}
\|u\|_{S(\rn)} \leq  C\| u \|_{W^m X(\rn)}
\end{equation}
for every $u\in W^m X(\rn)$. We have to show that
$$(X\cap X\sp m)(\rn) \to S(\rn),$$
or, equivalently, that
\begin{equation}\label{12a}
\|f\|_{S(0,\infty)} \leq C \left(\|f\|_{X(0,\infty)}+ \|f\|_{{X}\sp
m(0,\infty)}\right)
\end{equation}
for some constant $C$, and every $f \in \mathcal M_+(0, \infty)$.
%
Inequality \eqref{12a} will follow if we  show that
\begin{equation}\label{E:claim-1}
\|f\sp*\chi_{(0,1)}\|_{S(0,\infty)} \leq C \|f\|_{{X}\sp
m(0,\infty)}
\end{equation}
and
\begin{equation}\label{E:claim-2}
\|f\sp*\chi_{(1,\infty)}\|_{S(0,\infty)} \leq C \|f\|_{X(0,\infty)}
\end{equation}
for some constant $C$, and for every  $f \in \mathcal M_+(0,
\infty)$.
\\
Let $\mathcal B$ be the ball in $\rn$, centered at $0$, such that
$|\mathcal B|=1$. We claim that inequality \eqref{E:10} implies that
\begin{equation}\label{A1}
\|v\|_{S(\mathcal B)} \leq C \|v\|_{W^m X(\mathcal B)}
\end{equation}
for some constant $C$, and for every $v\in W^m X(\mathcal B)$. Here,
$X(\mathcal B)$ and $S(\mathcal B)$ denote the
rearrangement-invariant spaces built upon the function norms $\|
\cdot \|_{\widetilde{X}(0, 1)}$ and $\| \cdot \|_{\widetilde{S}(0,
1)}$ defined as in \eqref{tilde}. To verify this claim, one can make
use of the fact that there exists a bounded extension operator $T:
W^m X(\mathcal B) \hra W^m X(\rn)$ \cite[Theorem 4.1]{CR}. Thus,
\begin{equation}\label{A2}
\|T v\|_{W^m X(\rn)} \leq C \|v\|_{W^m X(\mathcal B)}
\end{equation}
for some constant $C$ and for every $v\in W^m X(\mathcal B)$.
Coupling \eqref{E:10} with \eqref{A2} we deduce that
\begin{equation*}
\|v\|_{S(\mathcal B)} = \|T v\|_{S(\mathcal B)} \leq \|T v
\|_{S(\rn)} \leq C \| T v \|_{W^m X(\rn)} \leq C' \|  v \|_{W^m
X(\mathcal B)}
\end{equation*}
for some constants $C$ and $C'$, and for every $v\in W^m X(\mathcal
B)$. Hence, inequality \eqref{A1} follows.
\\
 Inequality \eqref{A1} in turn implies
that
\begin{equation}\label{E:newly-proved-necessity}
\bigg\| \int _s^1 g(r) r^{-1+\frac mn}\, dr \bigg\|_{\widetilde
S(0,1)} \leq C \|g\|_{\widetilde X(0,1)}
\end{equation}
for some positive constant $C$, and every $g\in\M_+(0,1)$.
 This implication can be found  in the proof of
\cite[Theorem~A]{T2}. For completeness, we provide a proof
hereafter, that also fixes some details in that of \cite{T2}.
\\
Let us preliminarily note that  we can restrict our attention to the
case when $m<n$. Indeed, if $m \geq n$, inequality
\eqref{E:newly-proved-necessity} holds with $\|\cdot \|_{\widetilde
S(0,1)} = \|\cdot \|_{L^\infty (0, 1)}$, and hence for every
rearrangement-invariant norm $\|\cdot \|_{S(0,\infty)}$.
\\
Given any bounded function $f \in \mathcal M_+(0,1)$, define
\begin{equation}\label{31a}
u(x)= \int_{\omega_n|x|^n}^{1} \int_{s_1}^{1} \int_{s_2}^{1}\cdots
\int_{s_{m-1}}^{1} f(s_m) s_m^{-m+ \frac mn} \; ds_m\cdots ds_1
\qquad \hbox{for $x \in \mathcal B$.}
\end{equation}
Set $M=\omega_n\sp{-\frac 1n}$. We need to derive a pointwise
estimate for $|D^mu|$. As a preliminary step, consider  any function
$v : \mathcal B \to [0, \infty)$ given by
\begin{equation*}
v(x)= g(|x|) \qquad \hbox{for $x \in \mathcal B$,}
\end{equation*}
where   $g: (0,M)\to[0,\infty)$ is an $m$-times weakly
differentiable function. One can show that every $\ell$-th order
derivative of $v$, with $1 \leq \ell \leq m$, is a~linear
combination of terms of the form
\[
\frac{x_{\alpha_1}\dots x_{\alpha_i}g\sp{(j)}(|x|)}{|x|\sp k} \qquad
\hbox{for a.e. $x \in \mathcal B$,}
\]
where  $\alpha_1,\dots,\alpha_i\in\{1,\dots,n\}$, and
\begin{equation*}
1\leq j\leq\ell,\quad 0\leq i\leq \ell, \quad k-i=\ell-j.
\end{equation*}
Here, $g\sp{(j)}$ denotes the $j$-th order derivative of $g$. As a
consequence,
\begin{equation}\label{E:doublesum}
\sum_{\ell=1}\sp{m}|\nabla\sp{\ell}v(x)|\leq
C\sum_{\ell=1}\sp{m}\sum_{k=1}\sp{\ell}\frac{|g\sp{(k)}(|x|)|}{|x|\sp{\ell-k}}
\qquad \hbox{for a.e. $x \in \mathcal B$.}
\end{equation}
Next, consider functions $g$  defined by
$$
g(s)=\int_{\omega_n s\sp
n}\sp{1}\int_{s_1}\sp{1}\dots\int_{s_{m-1}}\sp{1}
f(s_m)s_m\sp{-m+\frac mn}\,ds_m\dots ds_1 \qquad \hbox{for $s \in
(0,M)$,}
$$
where $f$ is as in \eqref{31a}.  It can be verified that, for each
$1\leq k\leq m-1$, the function $g\sp{(k)}(s)$ is a linear
combination of functions of the form
\[
s\sp{jn-k}\int_{\omega_n s\sp
n}\sp{1}\int_{s_{j+1}}\sp{1}\dots\int_{s_{m-1}}\sp{1}
f(s_m)s_m\sp{-m+\frac mn}\,ds_m\dots ds_{j+1} \qquad \hbox{for $s
\in (0, M)$},
\]
where $j\in \{1,2,\dots,k\}$,
whereas $g\sp{(m)}(s)$ is a linear combination of functions of the
form
\begin{equation}\label{26a}
s\sp{jn-m}\int_{\omega_n s\sp
n}\sp{1}\int_{s_{j+1}}\sp{1}\dots\int_{s_{m-1}}\sp{1}
f(s_m)s_m\sp{-m+\frac mn}\,ds_m\dots ds_{j+1}\qquad \hbox{for $s \in
(0,M)$,}
\end{equation}
where $j\in \{1,2,\dots,m-1\}$, and of the function $f(\omega_n s\sp
n)$. Note that, if $j=m-1$, then the expression in \eqref{26a} has
to be understood as
\[
s^{(m-1)n-m}\int_{\omega_n s\sp n}\sp{1} f(s_m)s_m\sp{-m+\frac
mn}\,ds_m\qquad \hbox{for $s \in (0,M)$.}
\]
As a consequence of these formulas, we can infer that, if $1\leq
k\leq m-1$, then
\begin{equation}\label{27a}
|g\sp{(k)}(s)|\lesssim \sum_{j=1}\sp k s\sp{jn-k}\int_{\omega_n s\sp
n}\sp{1}f(r)r\sp{-j+\frac mn-1}dr \qquad \hbox{for $s \in (0,M)$,}
\end{equation}
and
\begin{equation}\label{28a}
|g\sp{(m)}(s)|\lesssim \sum_{j=1}\sp{m-1}s\sp{jn-m}\int_{\omega_n
s\sp n}\sp{1}f(r)r\sp{-j+\frac mn-1}dr +f(\omega_n s\sp n)  \qquad
\hbox{for a.e. $s \in (0,M)$.}
\end{equation}
From equations \eqref{E:doublesum}, \eqref{27a} and \eqref{28a} one
can deduce that
\begin{align}\label{29a}
|D^m u(x)| &\lesssim f(\omega_n |x|^n) + \int_{\omega_n |x|^n}^1
f(s) s^{-1 + \frac mn}\; ds + \sum_{j=1}\sp{m-1} |x|^{jn-m}
\int_{\omega_n |x|^n}^1 f(s) s^{-j + \frac mn -1}ds\
\end{align}
for a.e. $x \in \mathcal B$. On the other hand, by Fubini's theorem,
\begin{equation}\label{E:12}
u(x) = \int_{\omega_n |x|^n}^1 f(s) s^{-m + \frac mn} \frac{(s -
\omega_n |x|^n)^{m-1}}{(m-1)!}\; ds
 \gtrsim
\chi_{(0,1)}(2\omega_n |x|^n) \int_{2\omega_n |x|^n}^{1} f(s)
s^{\frac mn -1}\; ds
\end{equation}
for  $x \in \mathcal B$. The following chain holds:
\begin{align}\label{30a}
&\left \|\int_{s}^1 f(r) r^{-1 + \frac mn} \; dr \right
\|_{\widetilde S(0,1)} \lesssim \left
\|\chi_{(0,\frac12)}(t)\int_{2s}^1 f(r) r^{-1 + \frac mn} \; dr
\right \|_{\widetilde S(0,1)} \lesssim {\|u\|_{S(\mathcal B)}}
\lesssim  \| D^m u\|_{X(\mathcal B)}
\\
& \lesssim  \| f\|_{\widetilde X(0,1)} + \left \|\int _s^1 f(r)
r^{-1 +
%
%
\frac mn}\; dr\right\|_{\widetilde X(0,1)} + \sum _{j=1}^{m-1}
\left\|s^{j- \frac mn} \int_s^1 f(r) r^{-j + \frac mn -1}\; dr
\right\|_{\widetilde X(0,1)} \nonumber
\\
& \nonumber
 \lesssim  \| f\|_{\widetilde X(0,1)}.
\end{align}
Here, we have made use of \eqref{E:12}, \eqref{E:10}, \eqref{29a},
and of  the boundedness of the operators
$$
f (s) \mapsto s^{j- \frac mn} \int_s^1 f(r) r^{-j + \frac mn -1}\;
dr \quad\textup{and}\quad f (s) \mapsto \int_s^1 f(r) r^{\frac mn
-1}\,dr
$$
on $\widetilde X (0,1)$ for every $j$ as above. The boundedness of
these operators follows from Calder\'on interpolation theorem, owing
to their boundedness in   $L\sp1(0,1)$ and $L\sp{\infty}(0,1)$.
Inequality \eqref{E:newly-proved-necessity} follows
from~\eqref{30a}.
Inequality \eqref{E:newly-proved-necessity} implies, via the
optimality of the norm $\|\cdot \|_{Z(0,1)}$ in \eqref{hardyopt},
that
\[
\|g\|_{\widetilde S(0,1)}\lesssim \|g\|_{Z(0,1)}
\]
for every $g\in\M_+(0,1)$.
Given any~function $f\in\M_+(0,\infty)$, we can apply the latter
inequality to $f\sp*\chi_{(0,1)}$, and obtain
\[
\|f\sp*\chi_{(0,1)}\|_{\widetilde S(0,1)}\leq
C\|f\sp*\chi_{(0,1)}\|_{Z(0,1)}.
\]
By~\eqref{tilde} and~\eqref{E:definition-of-Y-opt-2-higher}, this
entails \eqref{E:claim-1}.
\\
Let us next focus on  \eqref{E:claim-2}. Fix $L>0$ and consider
trial functions in \eqref{E:10} of the form
\begin{equation*}
u(x)= \varphi (x') \, \psi(x_1) \, \int_{x_1}^\infty
\int_{s_1}^\infty \cdots \int_{s_{m-1}}^\infty f(s_m) \; ds_m \cdots
ds_1 \qquad \hbox{for $x\in \rn\,,$}
\end{equation*}
where $x=(x_1, x')$, $x'\in \R^{n-1}$, $x_1\in \R$. Here, $f\in
\mathcal{M}_+(\R)$  and has bounded support; $\psi \in
\mathcal{C}^\infty (\R)$, with $\psi=0$ in  $(-\infty, L]$,
$\psi\equiv 1$ in  $[2L, +\infty)$, $0\leq \psi  \leq 1$ in $\R$;
$\varphi \in \mathcal{C}^\infty_0 (\mathcal B_2^{n-1})$, with
$\varphi=1$ in $\mathcal B_1^{n-1}$, where $\mathcal B_\rho ^{n-1}$
denotes the ball in $\R^{n-1}$, centered at $0$, with radius $\rho$.
An application of Fubini's theorem yields
\begin{equation*}
u(x)= \varphi (x') \, \psi(x_1) \, \int_{x_1}^\infty
 f(s) \frac{(s- x_1)^{m-1}}{(m-1)!}\; ds \qquad \hbox{for $x\in \rn\,.$}
\end{equation*}
Thus,
\begin{align}\label{A4}
u(x)&\geq \chi_{\mathcal B_1^{n-1}} (x')\, \frac{\psi(x_1)}{2^{m-1}
(m-1)!} \, \int_{2x_1}^\infty f(s) s^{m-1}\; ds
\\
\cr & \geq  \frac{\chi_{\mathcal B_1^{n-1}} (x')\,\chi_{(2L,
\infty)}(x_1)}{2^{m-1} (m-1)!} \, \int_{2x_1}^\infty f(s) s^{m-1}\;
ds
 \qquad \hbox{for $x\in \rn\,,$} \nonumber
\end{align}
and
\begin{equation}\label{A5}
u(x)\leq  \frac{\chi_{\mathcal B_2^{n-1}} (x')\,\chi_{(L,
\infty)}(x_1)}{(m-1)!} \, \int_{x_1}^\infty f(s) s^{m-1}\; ds
 \qquad \hbox{for $x\in \rn\,.$}
\end{equation}
Moreover, if $1\leq k\leq m-1$, then
\begin{align}\label{A6}
|\nabla ^k u(x)|& \lesssim \chi_{\mathcal B_2^{n-1}} (x') \chi_{(L,
\infty)}(x_1)  \sum_{j=1}^{k+1} \int_{x_1}^\infty \int_{s_j}^\infty
\int_{s_{j+1}}^\infty \cdots \int_{s_{m-1}}^\infty f(s_m) \; ds_m
\cdots dt_j
\\
\cr & = \chi_{\mathcal B_2^{n-1}} (x') \chi_{(L, \infty)}(x_1)
 \sum_{j=1}^{k+1} \int_{x_1}^\infty  f(s_m) \frac{(s_m -
x_1)^{m-j}}{(m-j)!} \; ds_m   \nonumber
\\
\cr & \lesssim \chi_{\mathcal B_2^{n-1}} (x') \chi_{(L,
\infty)}(x_1)
 \sum_{j=1}^{k+1} \int_{x_1}^\infty  f(s_m) s_m ^{m-j} \; ds_m
  \nonumber
\\
\cr & \lesssim \chi_{\mathcal B_2^{n-1}} (x') \chi_{(L,
\infty)}(x_1) \int_{x_1}^\infty  f(s) s^{m-1} \; ds \qquad \hbox{for
a.e. $x\in \rn$}\nonumber
\end{align}
and, similarly,
\begin{equation}\label{A7}
|\nabla ^m u(x)| \lesssim \chi_{\mathcal B_2^{n-1}} (x') \chi_{(L,
\infty)}(x_1) \left (\int_{x_1}^\infty  f(s) s^{m-1} \; ds +
f(x_1)\right) \qquad \hbox{for a.e. $x\in \rn$}\,.
\end{equation}
Now, observe that, if a function $w\in \mathcal{M}_+(\rn)$ has the
form
\begin{equation*}
w(x)= g(x_1) \chi_{\mathcal{B}_N^{n-1}}(x')\qquad \hbox{for a.e.
$x\in \rn$},
\end{equation*}
for some $g\in \mathcal{M}_+ (\R)$ and $N>0$, then
\begin{align*}
|\{ x\in \rn: w(x) >t\}| = \omega_{n-1} N^{n-1} |\{x_1\in \R:
g(x_1)>t\}|
\qquad \hbox{for $t>0$\,,}
\end{align*}
whence
\begin{equation}\label{A8}
w^* (s) = g^* \left(\frac s{\omega_{n-1} N^{n-1}}\right)\qquad
\hbox{for $s\geq 0$\,.}
\end{equation}
From \eqref{A4}--\eqref{A8} we thus deduce that
\begin{equation}\label{A9}
u^*(s)\gtrsim \left ( \chi_{(2L, \infty)} (\, \cdot\, ) \int_{2 (\,
\cdot \,)}^\infty f(\tau) \tau^{m-1}\; d\tau \right)^*\left(\frac sc
\right) \qquad \hbox{for $s>0$}
\end{equation}
and
\begin{equation}\label{A10}
|\nabla ^m u|^*(s)\lesssim \int_{c\,s}^\infty f(r) r^{m-1}\; dr +
f^*(c\,s) \qquad \hbox{for $s>0$}\,,
\end{equation}
for some constant $c>0$. Note that here we have made use of
\eqref{sum*}. An application of inequality \eqref{E:10} yields, via
\eqref{A9}, \eqref{A10} and the boundedness of the dilation operator
in rearrangement-invariant spaces,
\begin{equation}\label{A11}
\left \| \chi_{(4L, \infty)} (s) \int_s^\infty f(r) r^{m-1} \; dr
\right \|_{S(0, \infty)} \lesssim \left \| \int_s^\infty f(r)
r^{m-1} \; dr \right \|_{X(0, \infty)} + \|f\|_{X(0, \infty)}
\end{equation}
for every $f\in\mathcal{M}_+(0, \infty)$ with bounded support. Now,
if $f$ is any function from $\mathcal{M}_+(0, \infty)$, one can
apply inequality
 \eqref{A11} with $f$ replaced by the function $f\chi _{(0,k)}$, for $k \in \N$,
 and pass to the limit as $k \to \infty$ to deduce \eqref{A11}.
 Note that  property (P3) of function norms
plays a role in this argument.
 %
 Finally, choosing $L=\frac 18$ in
\eqref{A11} and applying Proposition \ref{P:1} tell us that
\begin{equation*}
\|f^* \chi_{(1, \infty)} \|_{S(0, \infty)}\lesssim \|f\|_{X(0,
\infty)}
\end{equation*}
 for every $f\in \mathcal{M}_+(0, \infty)$, namely, \eqref{E:claim-2}.
\end{proof}

\begin{proof}[Proof of Corollary~\ref{T:supercrit}]
Under assumption \eqref{conv}, the function norm $\| \cdot
\|_{Z(0,1)}$, defined by \eqref{Z}, is equivalent to   $\| \cdot
\|_{L^\infty (0, 1)}$, up to multiplicative constants. Indeed,
\[
\|g\|_{Z'(0,1)}=\bigg\|s\sp{-1+\frac mn}\int_0\sp s
g\sp*(r)\,dr\bigg\|_{\widetilde X'(0,1)} \leq \left(\int_0\sp 1
g\sp*(r)\,dr \right)\;\big\| \chi _{(0,1)}(s) s^{-1+\frac mn}
\big\|_{X'(0,\infty)} \leq C\|g\|_{L\sp1(0,1)},
\]
for some constant $C$, and for every $g\in\M(0,1)$. This chain
establishes the embedding $L\sp1(0,1)\to Z'(0,1)$.  The converse
embedding follows from (P5). Thus $Z'(0,1)=L\sp{1}(0,1)$, whence
$Z(0,1)=L\sp{\infty}(0,1)$. The coincidence of the space $X^m_{\rm
opt}(\rn)$ with $(L^\infty \cap X)(\rn)$ then follows by the very
definition of the former.
\end{proof}

The last proof of this  section concerns
Theorem~\ref{T:reduction-higher}.

\begin{proof}[Proof of Theorem~\ref{T:reduction-higher}]
Suppose that conditions~\eqref{E:reduction2}
and~\eqref{E:reduction3} are in force. By
Theorem~\ref{T:main-higher},
\[
W\sp mX(\rn)\hra(X\sp m\cap X)(\rn).
\]
Thus, in order to prove \eqref{E:reduction1}, it suffices  to show
that
\begin{equation}\label{24a}
(X\sp m\cap X)(\rn)\hra Y(\rn).
\end{equation}
Observe that inequality~\eqref{E:reduction2} can be written in the
form
\begin{equation*}
\left\|\int_s^1 f(r) r^{-1+\frac{m}{n}} \;
dr\right\|_{\widetilde{Y}(0,1)} \lesssim \|f\|_{\widetilde{X}(0,1)}
\end{equation*}
for every non-increasing function $f : (0,1) \to [0, \infty)$, where
the function norms $\|\cdot \|_{\widetilde{X}(0,1)}$ and $\|\cdot
\|_{\widetilde{Y}(0,1)}$ are defined as in \eqref{tilde}.
 By the optimality of the space $Z (0,1)$ in inequality
\eqref{hardyopt}, one has that
\begin{equation*}
Z (0,1)\to \widetilde Y(0,1),
\end{equation*}
whence
\begin{equation}\label{22a}
\|g^*\chi_{(0,1)}\|_{Y(0,\infty)} \lesssim \|g^*\|_{ Z (0,1)}
\end{equation}
for any $g\in \mathcal {M}_+(0, \infty)$. Owing to \eqref{22a} and
\eqref{E:reduction3},
\begin{align*}
\|u\|_{(X\sp m\cap X)(\rn)} &= \|u\|_{X(\rn)}+\|u\|_{X\sp m(\rn)} =
\|u\sp*\|_{X(0,\infty)}+\|u\sp*\|_{ X\sp m(0,\infty)}    =
\|u\sp*\|_{X(0,\infty)}+\|u\sp*\|_{Z (0,1)}
\\
& \gtrsim
\|u\sp*\chi_{(1,\infty)}\|_{X(0,\infty)}+\|u\sp*\chi_{(0,1)}\|_{Y(0,\infty)}\geq
\|u^*\|_{Y(0,\infty)} = \|u\|_{Y(\rn)}
\end{align*}
for any $u\in (X^m\cap X)(\rn)$. Inequality \eqref{24a} is thus
established.
\par
Conversely,  assume that embedding \eqref{E:reduction1} holds. Owing
to the optimality of the rearrangement-invariant target norm
$X^m_{\rm opt}(\rn)$ in \eqref{T:main-higher},
\[
(X\sp m\cap X)(\rn)\hra Y(\rn)\,.
\]
By \eqref{E:definition-of-Y-opt-2-higher}, the latter embedding
implies that
\begin{equation}\label{E:necessity-embedding}
\|f\|_{Y(0,\infty)}\lesssim \|f\|_{X(0,\infty)}+\|f\sp*\|_{Z(0,1)}
\end{equation}
for any $f\in\M_+(0,\infty)$. In particular, applying this
inequality to functions $f$ of the form $g\sp*\chi_{(0,1)}$, and
making use of Proposition \ref{ZX} tell us that
\begin{align*}
\|g\sp*\chi_{(0,1)}\|_{Y(0,\infty)}  \lesssim
\|g\sp*\chi_{(0,1)}\|_{ X(0,\infty)}+\|g\sp*\|_{Z(0,1)}
 = \|g^*\|_{\widetilde X (0,1)} + \|g^*\|_{Z (0,1)}
\lesssim \|g\sp*\|_{ Z (0,1)}.
\end{align*}
In particular,
\[
\bigg\|\chi_{(0,1)}(s) \int _s^1 f(r) r^{-1+\frac mn}\, dr
\bigg\|_{Y (0,\infty)} \lesssim \bigg\|\int _s^1 f(r) r^{-1+\frac
mn}\, dr \bigg\|_{Z (0,1)}
\]
for every $f \in \mathcal M_+(0, \infty)$. Since, by
\eqref{hardyopt},
\[
\bigg\|\int _s^1 f(r) r^{-1+\frac mn}\, dr \bigg\|_{Z (0,1)}\lesssim
\|f\|_{ \widetilde X(0,1)} = \|\chi_{(0,1)}f\|_{ X(0,\infty)}
\]
for any non-increasing function $f : (0, \infty ) \to [0, \infty )$,
inequality \eqref{E:reduction2} follows. On the other hand, on
applying~\eqref{E:necessity-embedding} to functions of the form
$f\sp*\chi_{(1,\infty)}$, we obtain
\begin{align*}
\|f\sp*\chi_{(1,\infty)}\|_{Y(0,\infty)} & \lesssim
\|f\sp*\chi_{(1,\infty)}\|_{ X(0,\infty)}+f\sp*(1^-)\|1\|_{ Z (0,1)}
\\ & \lesssim \|f\sp*\chi_{(1,\infty)}\|_{
X(0,\infty)}+f\sp*(1^-)\|\chi_{(0,1)}\|_{ X(0,\infty)} \lesssim
\|f\|_{X(0,\infty)},
\end{align*}
namely~\eqref{E:reduction3}.
\end{proof}


\section{Proofs of Theorems \ref{T:orlicz2}, \ref{T:orlicz1} and \ref{T:application-to-lorentz-spaces}}

The proofs of our results about Orlicz-Sobolev embeddings require a
couple of preliminary lemmas.

\begin{lemma} \label{lemma1}
Let $F$ and $G$ be Young functions. Assume that there exist
constants $t_0>0$ and $c>0$ such that
\begin{equation*}
F(t)\leq G(c\,t) \qquad \hbox{\rm if} \;\; 0\leq t\leq t_0.
\end{equation*}
Let $L\geq 0$. Then
\begin{equation}\label{dic31}
\|f^*\|_{L^F(L, \infty)} \leq \max \Big\{{f^*(L)}/{t_0},
c\,\|f^*\|_{L^G(L , \infty)}\Big\}
\end{equation}
for every $f \in \M(0, \infty)$.
%
In particular,
\begin{equation}\label{dic30}
(L^G \cap L^\infty) (0, \infty) \to L^F(0, \infty).
\end{equation}
\end{lemma}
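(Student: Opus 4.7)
The plan is to argue directly from the definition of the Luxemburg norm. Set $\mu = \|f^*\|_{L^G(L,\infty)}$ and, assuming that the right-hand side of \eqref{dic31} is finite (otherwise there is nothing to prove), fix any $\lambda > \max\{f^*(L)/t_0,\, c\mu\}$. My aim will be to verify that
$$\int_L^\infty F\!\left(\frac{f^*(s)}{\lambda}\right) ds \leq 1,$$
because, by the very definition of $\|\cdot\|_{L^F(L,\infty)}$, this bound forces $\|f^*\|_{L^F(L,\infty)} \leq \lambda$; letting $\lambda$ decrease to $\max\{f^*(L)/t_0, c\mu\}$ then gives \eqref{dic31}.

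The two constraints on $\lambda$ are used separately. First, since $f^*$ is non-increasing, for every $s \geq L$ one has $f^*(s)/\lambda \leq f^*(L)/\lambda < t_0$, so the pointwise hypothesis $F(t)\leq G(ct)$ on $[0,t_0]$ applies at $t = f^*(s)/\lambda$, yielding
$$\int_L^\infty F\!\left(\frac{f^*(s)}{\lambda}\right) ds \leq \int_L^\infty G\!\left(\frac{f^*(s)}{\lambda/c}\right) ds.$$
Second, since $\lambda/c > \mu = \|f^*\|_{L^G(L,\infty)}$, the infimum defining the Luxemburg norm provides some $\lambda' \in (\mu, \lambda/c)$ with $\int_L^\infty G(f^*(s)/\lambda')\,ds \leq 1$; the monotonicity of $G$ and the inequality $f^*/(\lambda/c) \leq f^*/\lambda'$ then upgrade this to $\int_L^\infty G(f^*(s)/(\lambda/c))\,ds \leq 1$, completing the chain.

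Finally, the embedding \eqref{dic30} follows on specializing \eqref{dic31} to $L=0$, using $f^*(0)=\|f\|_{L^\infty(0,\infty)}$, which produces the pointwise estimate $\|f\|_{L^F(0,\infty)} \leq \|f\|_{L^\infty(0,\infty)}/t_0 + c\|f\|_{L^G(0,\infty)}$. There is no substantive obstacle in the argument; the only delicate point is the transition from the strict inequality $\lambda/c > \|f^*\|_{L^G(L,\infty)}$ to the (non-strict) integral bound $\int_L^\infty G(f^*/(\lambda/c))\,ds \leq 1$, which is the standard approximation inside a Luxemburg infimum.
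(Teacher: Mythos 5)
Your proof is correct and follows essentially the same route as the paper: a direct Luxemburg-norm computation in which the bound $\lambda \gtrsim f^*(L)/t_0$ ensures $f^*(s)/\lambda$ stays in the range where $F(t)\le G(ct)$ applies, and the bound $\lambda \gtrsim c\|f^*\|_{L^G(L,\infty)}$ controls the resulting $G$-integral; the only cosmetic difference is that the paper normalizes $f^*(L)=1$ and plugs in $\lambda$ equal to the maximum (using the standard fact that the integral at the Luxemburg norm itself is at most $1$), whereas you take $\lambda$ strictly larger and pass to the limit, which is an equally valid way to handle that point.
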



\begin{proof}
If $f^*(L)=0$, then $f^*=0$ in $(L, \infty)$, and \eqref{dic31}
holds trivially. Since the case when $f^*(L)=\infty$ is trivial as
well, we can in fact assume that $f^*(L) \in  (0,\infty)$. On
replacing $f$ with $\frac f{f^*(L)}$, we may suppose that
$f^*(L)=1$. Let
\begin{equation*}
\lambda =\max\Big\{ 1/{t_0}, c\,\|f^*\|_{L^G(L, \infty)}\Big\}\, .
\end{equation*}
Then,
\begin{align*}
\int_L ^\infty F\left(\frac {f^*(t)}{\lambda} \right)\; dt \leq
\int_L^\infty G\left(\frac{c\, f^*(t)}{\lambda}\right) \; dt \leq
\int_L^\infty G\left(\frac{f^*(t)}{\|f^*\|_{L^G(L, \infty)}}\right)
\; dt \leq 1\, ,
\end{align*}
and hence
\begin{equation*}
\|f^*\|_{L^F(L, \infty)} \leq  \max\Big\{1/{t_0}, c
\,\|f^*\|_{L^G(L, \infty)}\Big\} \, .
\end{equation*}
\\
Inequality \eqref{dic30} is a consequence of \eqref{dic31}, applied
with $L=0$, since $\|f\|_{L^\infty(0,\infty)}=f^*(0)$, and
$$\|f\|_{(L^G\cap L^\infty) (0, \infty)}
\simeq \max \big\{ \|f^*\|_{L^G(0 , \infty)},
\|f\|_{L^\infty(0,\infty)}\big\}.$$
\end{proof}

\begin{lemma} \label{lemma2}
Let $F$ and $G$ be Young functions such that
\begin{equation*}
F \quad\hbox{dominates}\quad G \qquad \hbox{near infinity}.
\end{equation*}
Assume that the function $H$, defined as
\begin{equation*}
H(t)=
\begin{cases}
G(t) &\textup{if}\;\; t\in [0, 1]
\\   F(t) &\textup{if}\;\; t\in (1, \infty)\,,
\end{cases}
\end{equation*}
is a Young function. Then
\begin{equation}\label{april7}
\|f^*\|_{L^F(0, 1)}+ \|f^*\|_{L^G(0, \infty)} \simeq \|f^*\|_{L^H(0,
\infty)} \end{equation} for every
 $f \in \M(0, \infty)$.
\end{lemma}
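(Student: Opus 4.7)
The plan is to prove the two inequalities separately, since the equivalence $\simeq$ in \eqref{april7} amounts to a pair of opposite estimates, one of which reduces to the Orlicz-embedding principle already recalled in the paper, while the other requires a direct Luxemburg-norm computation.

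For the easy direction (sum $\lesssim \|f^*\|_{L^H(0,\infty)}$), I would invoke the principle from Section~\ref{S:preliminaries}: $L^A\hookrightarrow L^B$ on $(0,\infty)$ iff $A$ dominates $B$ globally, and on a finite-measure set iff $A$ dominates $B$ near infinity. Since $H\equiv F$ on $(1,\infty)$, the function $H$ trivially dominates $F$ near infinity, whence $\|f^*\|_{L^F(0,1)}\lesssim \|f^*\|_{L^H(0,1)}\leq \|f^*\|_{L^H(0,\infty)}$. For the other summand, one checks that $H$ dominates $G$ globally: for $t\in[0,1]$ and $c\geq 1$ monotonicity gives $H(ct)\geq H(t)=G(t)$, while for $t\in(1,\infty)$ the hypothesis that $F$ dominates $G$ near infinity yields $G(t)\leq F(c_0t)=H(c_0t)$ beyond a threshold, the bounded transition region being absorbed by finiteness and monotonicity of $F$ and $G$. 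Hence $\|f^*\|_{L^G(0,\infty)}\lesssim \|f^*\|_{L^H(0,\infty)}$.

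For the harder direction ($\|f^*\|_{L^H(0,\infty)}\lesssim$ sum), I would work directly from the Luxemburg definition. Set $\lambda_F=\|f^*\|_{L^F(0,1)}$, $\lambda_G=\|f^*\|_{L^G(0,\infty)}$, and choose $\lambda=K(\lambda_F+\lambda_G)$ with $K\geq 1$ to be fixed later. The pointwise bound $H(t)\leq F(t)+G(t)$ (immediate from the piecewise definition of $H$ and the non-negativity of $F,G$), combined with the convexity inequality $A(t/K)\leq A(t)/K$ valid for any Young function $A$ and $K\geq 1$, yields
\[
\int_0^1 H(f^*(s)/\lambda)\,ds \leq \int_0^1 F(f^*(s)/\lambda)\,ds + \int_0^1 G(f^*(s)/\lambda)\,ds \leq \frac{2}{K}.
\]
The delicate point is to control $\int_1^\infty H(f^*(s)/\lambda)\,ds$: there one wishes $H(f^*/\lambda)$ to fall in the $G$-branch, i.e.\ $f^*(s)/\lambda\leq 1$. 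I would obtain this from the elementary but decisive estimate
\[
f^*(1)\leq F^{-1}(1)\,\lambda_F,
\]
which follows from the monotonicity of $f^*$ together with $F(f^*(1)/\lambda_F)\leq \int_0^1 F(f^*/\lambda_F)\,ds\leq 1$. Consequently, as soon as $K\geq F^{-1}(1)$, we have $f^*(s)\leq f^*(1)\leq \lambda$ for $s\geq 1$, so $H(f^*(s)/\lambda)=G(f^*(s)/\lambda)$ in this range, and $\int_1^\infty H(f^*(s)/\lambda)\,ds\leq \int_0^\infty G(f^*(s)/\lambda)\,ds\leq 1/K$ by the same convexity trick.

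Summing the two contributions gives $\int_0^\infty H(f^*(s)/\lambda)\,ds\leq 3/K$, so choosing $K=\max\{F^{-1}(1),3\}$ forces the integral to be at most $1$, and the Luxemburg definition yields $\|f^*\|_{L^H(0,\infty)}\leq \lambda=K(\lambda_F+\lambda_G)$, as required. The main obstacle is precisely the pointwise bound $f^*(1)\leq F^{-1}(1)\lambda_F$: without it, the $F$-branch of $H$ could be triggered on $(1,\infty)$, a region where the data provide no a priori $F$-control. Once this bound is in hand, the proof reduces to the routine Luxemburg estimates outlined above.
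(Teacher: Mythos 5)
Your proof is correct, but it takes a genuinely different route from the paper's. The paper does not prove the two inequalities in \eqref{april7} directly: it introduces the functional $\|f\|_{X(0,\infty)}=\|f^*\|_{L^F(0,1)}+\|f^*\|_{L^G(0,\infty)}$, observes that both sides of \eqref{april7} are rearrangement-invariant function norms, and invokes \cite[Theorem 1.8, Chapter 1]{BS}, so that only the \emph{set} equality $X(0,\infty)=L^H(0,\infty)$ has to be checked; each inclusion is then a qualitative finiteness argument using the domination criteria for Orlicz embeddings together with Lemma \ref{lemma1} (with $L=1$) to control the tail norms $\|f^*\|_{L^H(1,\infty)}$ and $\|f^*\|_{L^G(1,\infty)}$. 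You instead prove the two-sided norm inequality directly: the easy direction via domination ($H$ dominates $F$ near infinity trivially, and $G$ globally — here your "transition region" remark is the standard compact-interval argument, which does need the unboundedness of Young functions and a little care with the value $+\infty$, but the near-infinity domination hypothesis rules out any pathology), and the hard direction by an explicit Luxemburg-norm computation whose decisive step, $f^*(1)\leq F^{-1}(1)\,\|f^*\|_{L^F(0,1)}$, guarantees that only the $G$-branch of $H$ is active on $(1,\infty)$; this plays exactly the role that the term $f^*(L)/t_0$ plays in Lemma \ref{lemma1}, but you derive the needed bound on $f^*(1)$ from the $F$-norm on $(0,1)$ rather than routing through that lemma and the abstract equivalence principle. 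What each approach buys: the paper's argument is shorter given the machinery already in place (Lemma \ref{lemma1} and the Bennett--Sharpley principle, a template it reuses again for equation \eqref{11}), while yours is self-contained and quantitative, producing an explicit constant $K=\max\{F^{-1}(1),3\}$ and avoiding both the nonconstructive closed-graph-type theorem and the need to verify that the intersection functional is itself a function norm; just make sure to read $F^{-1}(1)$ as $\sup\{t:F(t)\leq1\}$ (finite for any nontrivial Young function), to record the standard fact that the modular at the Luxemburg norm value is at most $1$, and to dispose of the trivial cases $\lambda_F\in\{0,\infty\}$ or $\lambda_G\in\{0,\infty\}$.
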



\begin{proof}
Define the rearrangement-invariant function norm $\|\cdot\|_{X(0,
\infty)}$ as
\begin{equation*}
\|f\|_{X(0, \infty)} = \|f^*\|_{L^F(0, 1)} + \| f^*\|_{L^G(0,
\infty)} \,
\end{equation*}
for $f \in \M_+(0, \infty)$.
Thanks to \cite[Theorem 1.8, Chapter 1]{BS}, equation \eqref{april7}
will follow if we show that
\begin{equation*}
X(0, \infty)=L^H(0, \infty)
\end{equation*}
as a set equality.
Assume first that $f\in X(0, \infty)$. We have that
\begin{align}\label{april8}
\|f^*\|_{L^H(0, \infty)} &\leq \|f^*\|_{L^H(0, 1)} + \|f^*\|_{L^H(1,
\infty)} \lesssim \|f^*\|_{L^F(0, 1)} +  \|f^*\|_{L^H(1, \infty)}\,
,
\end{align}
where the second inequality holds since $H$ is equivalent to $F$
near infinity. By Lemma \ref{lemma1}, $\|f^*\|_{L^H(1,
\infty)}<\infty$ if $f\in L^G(0, \infty)$. Thus, equation
\eqref{april8} implies that $f\in L^H(0, \infty)$. Suppose next that
$f\in L^H(0, \infty)$. Then
\begin{align}\label{2}
\|f^*\|_{L^F(0, 1)} + \|f^*\|_{L^G(0, \infty)}& \leq \|f^*\|_{L^F(0,
1)}+ \|f^*\|_{L^G(0, 1)} + \|f^*\|_{L^G(1, \infty)}
\\
& \lesssim \|f^*\|_{L^F(0, 1)} + \|f^*\|_{L^G(1, \infty)}\nonumber
\\
& \lesssim \|f^*\|_{L^H(0, 1)} + \|f^*\|_{L^G(1, \infty)}\nonumber\,
,
\end{align}
where the second inequality holds since $F$ dominates $G$ near
infinity, and the third one since $F$ and $H$ agree near infinity.
Since $f\in L^H(0, \infty)$, then $f\in L^G(1, \infty)$ by Lemma
\ref{lemma1}. Thus, the right-hand side of \eqref{2} is finite,
whence $f\in X(0, \infty)$.
\end{proof}


The main ingredients for a proof of
 Theorem \ref{T:orlicz1} are now at our disposal.

\begin{proof}[Proof of Theorem \ref{T:orlicz1}]
It suffices to show that
\begin{equation}\label{10}
\|f\|_{(L^A)^m_{\rm opt} (0, \infty)}\simeq \|v\, f^*\|_{L^E(0,
\infty)}
\end{equation}
for $f \in\M_+(0,\infty)$. The sharp Orlicz-Sobolev embedding
theorem on domains with finite measure asserts that the optimal
rearrangement-invariant function norm $\|\cdot\|_{Z(0,1)}$ in
inequality \eqref{hardyopt}, defined as in \eqref{Z} with $X(0,
\infty) = L^A (0, \infty)$, obeys
\begin{equation}
\label{april9} \|f\|_{Z(0,1)}= \| s^{-\frac mn} f^*(s)
\|_{L^{\widehat{A}}(0, 1)}
\end{equation}
for $f \in \M_+(0,1)$ \cite[Theorem 3.7]{Ci3} (see also \cite{Ci2}
for the case when $m=1$). Hence, owing to Theorem
\ref{T:main-higher},
\begin{equation*}
\|f\|_{(L^A)^m_{\rm opt} (0, \infty)} = \| s^{-\frac mn} f^*(s)
\|_{L^{\widehat{A}}(0, 1)} + \|f^* \|_{L^A(0, \infty)}\,
\end{equation*}
for $f \in \M_+(0, \infty)$. Thus, equation \eqref{10} will follow
if we show that
\begin{equation}\label{11}
\|s^{-\frac mn} f^*(s)\|_{L^{\widehat{A}}(0, 1)} + \|f^*\|_{L^A(0,
\infty)} \simeq \|v(s) f^*(s)\|_{L^E(0, \infty)}\,
\end{equation}
for $f \in \M_+(0, \infty)$. Since the two sides of \eqref{11}
define rearrangement-invariant function norms, by \cite[Theorem 1.8,
Chapter 1]{BS} it suffices to prove that the left-hand side of
\eqref{11} is finite if and only if the right-hand side is finite.
Assume that the left-hand side of \eqref{11} is finite for some $f \in \M_+(0, \infty)$. 
We have that
\begin{align*}
\|v(s) f^*(s)\|_{L^E(0, \infty)} &\leq \|v(s) f^*(s)\|_{L^E(0, 1)} +
\|v(s) f^*(s)\|_{L^E(1, \infty)} \\ & = \|s^{-\frac mn}
f^*(s)\|_{L^E(0, 1)} + \| f^*\|_{L^E(1, \infty)}\nonumber
\\& \lesssim \|s^{-\frac mn} f^*(s)\|_{L^{\widehat{A}}(0, 1)} + \|f^*\|_{L^E(1, \infty)}\, ,\nonumber
\end{align*}
where the second inequality holds inasmuch as $E$ and $\widehat{A}$
are equivalent near infinity. By Lemma \ref{lemma1},
$\|f^*\|_{L^E(1, \infty)} < \infty$, since we are assuming that
$\|f^*\|_{L^A(1, \infty)}<\infty$. Therefore, $\|v(s)
f^*(s)\|_{L^E(0, \infty)} < \infty$.
\\
Suppose next that the right-hand side of \eqref{11} is finite. Then
\begin{align*}
\left\|s^{-\frac mn} f^*(s)\right\|_{L^{\widehat{A}}(0, 1)} +
\|f^*\|_{L^A(0, \infty)} & \leq \left\|s^{-\frac mn}
f^*(s)\right\|_{L^{\widehat{A}}(0, 1)} + \|f^*\|_{L^A(0, 1)} +
\|f^*\|_{L^A(1, \infty)}
\\
& \lesssim \left\|s^{-\frac mn} f^*(s)\right\|_{L^{\widehat{A}}(0,
1)} + \left\|s^{-\frac mn}f^*(s)\right\|_{L^{\widehat{A}}(0, 1)} +
\|v(s)f^*(s)\|_{L^A(1, \infty)}
\\
& \lesssim \|v(s) f^*(s)\|_{L^E(0, 1)} + \|v(s) f^*(s)\|_{L^A(1,
\infty)}\, ,
\end{align*}
where the second inequality holds by equation \eqref{april9} and
Proposition \ref{ZX},
and the last one since  $E$ and $\widehat{A}$ are equivalent near
infinity. Inasmuch as  $v\,f^*\in L^E(1, \infty)$, one has that
$v\,f^*\in L^A(1, \infty)$, by Lemma \ref{lemma1}. Thus,
$\|s^{-\frac mn} f^*(s)\|_{L^{\widehat{A}}(0, 1)} + \|f^*\|_{L^A(0,
\infty)} < \infty$. 
\end{proof}

 Theorem \ref{T:orlicz2} relies upon a specialization of
 the reduction principle of Theorem \ref{T:reduction-higher} to Orlicz spaces.

\begin{proposition}[\textbf{Reduction principle for Orlicz spaces}] \label{redorl} Let $A$ and $B$ be Young functions. Then
\begin{equation*}
W\sp{m,A}(\rn)\hra L^B(\rn)
\end{equation*}
if and only if there exists a constant $C$ such that
\begin{equation}\label{E:reductionorl2}
\bigg\|\int _s^1 f(r) r^{-1+\frac mn}\, dr\bigg\|_{L^B (0,1)} \leq C
\| f\|_{ L^A (0,1)}
\end{equation}
for every  function $f\in \M (0, 1 )$, and
\begin{equation}\label{E:reductionorl3}
\hbox{$A$ dominates $B$ near $0$.}
\end{equation}
\end{proposition}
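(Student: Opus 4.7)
The plan is to apply Theorem~\ref{T:reduction-higher} with $X(\rn)=L^A(\rn)$ and $Y(\rn)=L^B(\rn)$, reducing the Orlicz--Sobolev embedding to the two one-dimensional conditions~\eqref{E:reduction2} and~\eqref{E:reduction3} for non-increasing $f:(0,\infty)\to[0,\infty)$. It will then remain to identify~\eqref{E:reduction2} with~\eqref{E:reductionorl2} and~\eqref{E:reduction3} with~\eqref{E:reductionorl3}.

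For the first condition, I would note that both sides of~\eqref{E:reduction2} depend only on the restriction of $f$ to $(0,1)$: the integrand is supported in $(0,1)$, and the Luxemburg norm on $L^B(0,\infty)$ of a function supported in $(0,1)$ coincides with the $L^B(0,1)$ norm (and similarly on the right). Invoking Remark~\ref{fdecreasing} to drop the monotonicity hypothesis, and then replacing $f$ by $|f|$, gives the equivalence of~\eqref{E:reduction2} (for non-increasing nonnegative $f$ on $(0,\infty)$) and~\eqref{E:reductionorl2} (for arbitrary $f\in\M(0,1)$).

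The heart of the proof is the equivalence of~\eqref{E:reduction3} with~\eqref{E:reductionorl3}. For the ``if'' direction, I would invoke Lemma~\ref{lemma1} with $F=B$, $G=A$, $L=1$: if $B(t)\leq A(ct)$ for $t\in[0,t_0]$, then $\|f\|_{L^B(1,\infty)}\leq\max\{f(1)/t_0,\,c\,\|f\|_{L^A(1,\infty)}\}$ for every non-increasing $f$. Since $f(1)\chi_{(0,1)}\leq f$ pointwise, $f(1)\leq A^{-1}(1)\|f\|_{L^A(0,\infty)}$, and~\eqref{E:reduction3} follows. For the converse, I would test~\eqref{E:reduction3} against the non-increasing functions $f=\tau\chi_{(0,s)}$ with $\tau>0$ and $s\geq 2$: direct computation gives $\|f\|_{L^A(0,\infty)}=\tau/A^{-1}(1/s)$ and $\|\chi_{(1,\infty)}f\|_{L^B(0,\infty)}=\tau/B^{-1}(1/(s-1))$, whence $A^{-1}(1/s)\leq C\,B^{-1}(1/(s-1))\leq C\,B^{-1}(2/s)$. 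Setting $v=A^{-1}(1/s)$ (so that $v\to 0^+$ as $s\to\infty$ and $A(v)=1/s$) yields $B(v/C)\leq 2A(v)$; convexity of $A$ with $A(0)=0$ gives $2A(v)\leq A(2v)$, so $B(v/C)\leq A(2v)$ for small $v$, i.e.\ $A$ dominates $B$ near $0$.

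No step is genuinely deep, since Theorem~\ref{T:reduction-higher}, Remark~\ref{fdecreasing}, and Lemma~\ref{lemma1} furnish precisely the tools needed; the main thing to watch is the bookkeeping of constants and the passage between the Luxemburg norm of a characteristic function $\tau\chi_{(0,s)}$ and the generalized inverse $A^{-1}(1/s)$, as well as the convexity trick $2A(v)\leq A(2v)$ that converts the ``near $0$'' comparison with a nontrivial multiplicative constant on the right-hand side into the standard form of domination.
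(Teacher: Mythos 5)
Your proposal is correct and follows the paper's own proof essentially step for step: Theorem \ref{T:reduction-higher} together with Remark \ref{fdecreasing} reduces everything to showing that \eqref{E:reductionorl3} is equivalent to $\|f\|_{L^B(1,\infty)}\lesssim\|f\|_{L^A(0,\infty)}$ for non-increasing $f$, the ``if'' part being Lemma \ref{lemma1} combined with $f(1)\leq A^{-1}(1)\,\|f\|_{L^A(0,\infty)}$, and the ``only if'' part coming from the test functions $\chi_{(0,s)}$, $s>1$, exactly as in the paper (which stops at $A^{-1}(1/s)\leq C\,B^{-1}(1/(s-1))$ and leaves the inverse-function manipulation implicit, whereas you carry it out). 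The only detail to patch is your parenthetical claim that $v=A^{-1}(1/s)\to0^+$: if $A$ vanishes on an interval $[0,a]$ with $a>0$ this fails, since then $v\geq a$ for every $s$; but in that case the same inequality $A^{-1}(1/s)\leq C\,B^{-1}(1/(s-1))$ forces $B^{-1}(\tau)\geq a/C$ for all small $\tau>0$, hence $B\equiv0$ near zero and the domination \eqref{E:reductionorl3} is trivial.
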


\begin{proof} By Theorem \ref{T:reduction-higher} and Remark \ref{fdecreasing},
it suffices to show that  condition
\eqref{E:reductionorl3} is equivalent to
the inequality
\begin{equation}
\label{E:reductionorl4} \| f\|_{L^B(1, \infty)} \leq C \|f\|_{L^A
(0,\infty)}
\end{equation}
for some constant $C$, and for every non-increasing function $f: (0,
\infty ) \to [0, \infty)$.
 Assume that \eqref{E:reductionorl3} holds.
 Given any  function $f$ of this kind, we have that
\begin{align*}
\|f\|_{L^A(0,1)}& \geq f(1)/ A^{-1}(1).
\end{align*}
 Hence, owing to assumption
\eqref{E:reductionorl3}, Lemma \ref{lemma1} implies that
\begin{align*}
\| f\|_{L^B(1, \infty)}\lesssim \max\{f(1), \|f\|_{L^A (0,\infty)}\}
\lesssim \|f\|_{L^A (0,\infty)}.
\end{align*}
Inequality \eqref{E:reductionorl4} is thus established.
\\ Conversely, assume that inequality \eqref{E:reductionorl4} is in force. Fix $r>1$.
An application of this inequality with $f=\chi_{(0,r)}$ yields
$$
\|1\|_{L^B(1,r)} \leq C\, \|1\|_{L^A(0,r)} \qquad \hbox{for $r >
1$,}$$ namely
$$\frac 1{B^{-1}(\frac 1{r-1})} \leq \frac C {A^{-1}(\frac 1{r})} \qquad
\hbox{for $r > 1$.}
$$
Hence, \eqref{E:reductionorl3} follows.
\end{proof}

\begin{proof}[Proof of Theorem \ref{T:orlicz2}]
Since $A_{\rm opt}$ is equivalent to $A_{\frac nm}$ near infinity,
by \cite[Inequality (2.7)]{Ci4} inequality \eqref{E:reductionorl2}
holds with $B=A_{\rm opt}$. Moreover, since $A_{\rm opt}$ is
equivalent to $A$ near zero, condition \eqref{E:reductionorl3} is
satisfied with $B=A_{\rm opt}$. Proposition \ref{redorl} thus tells
us that embedding \eqref{33} holds.
\\ On the other hand, if $B$ is any Young function such that
$ W\sp{m,A}(\rn)\to L^B(\rn), $ then, by Proposition \ref{redorl},
conditions \eqref{E:reductionorl2} and \eqref{E:reductionorl3} are
fulfilled. The former ensures that $A _{\frac nm}$ dominates $B$
near infinity -- see \cite[Proof of Theorem 3.1]{Ci3}. The latter
tells us that $A$ dominates $B$ near $0$. Altogether, $A_{\rm opt}$
dominates $B$ globally, whence $L^{A _{\rm opt}}(\rn ) \to
L^B(\rn)$.
\end{proof}

\begin{proof}[Proof of Theorem \ref{T:application-to-lorentz-spaces}]
 One has that $(L\sp{p,q}(\rn))'=
L\sp{p',q'}(\mathbb{R}\sp n)$, up to equivalent norms -- see e.g.
~\cite[Chapter~4, Theorem~4.7]{BS}. Since  $\big(\widetilde
f\big)\sp*=f\sp*$ for every $f \in \M_+(0,1)$,
$$\|f\|_{{\widetilde {(L\sp{p,q})}}'(0,1)}\simeq
\|f\|_{ L\sp{p',q'}(0,1)}.$$ Thus, on denoting by
$\|\cdot\|_{Z(0,1)}$ the rearrangement-invariant function norm
associated with $\|\cdot \|_{ L\sp{p,q}(0,\infty)}$ as in \eqref{Z},
one has that
\[
{\|f\|_{Z'(0,1)}}\simeq  \|s\sp{\frac
mn}f\sp{**}(s)\|_{L\sp{p',q'}(0,1)}
\]
for  $f \in \M_+(0,1)$. Note that here there is some slight abuse of
notation, since the functionals $\|\cdot\|_{{\widetilde
{L\sp{p,q}}}(0,1)}$ and $\|\cdot\|_{{\widetilde {L\sp{p,q}}}'(0,1)}$
are, in general, only equivalent to  rearrangement-invariant
function norms.
 As shown in \cite[Proof of Theorem~5.1]{CP-traces},
\begin{equation*}
\|f\|_{Z(0,1)}\simeq
\begin{cases}
\|s\sp{\frac1p-\frac mn-\frac 1q} f\sp*(s)\|_{L\sp{q}(0,1)}
&\textup{if}\;\; p=q= 1,\ \textup{or}\ 1<p<\frac nm\ \textup{and}\ 1\leq q\leq\infty,\\
\|s\sp{-\frac 1q}(\log\frac2s)\sp{-1} f\sp*(s)\|_{L\sp{q}(0,1)}
&\textup{if}\;\; p=\frac nm\ \textup{and}\ 1< q\leq\infty,\\
\|f\sp*\|_{L\sp{\infty}(0,1)} &\textup{if}\;\; p>\frac nm,\
\textup{or}\ p=\frac nm\ \textup{and}\ q=1, \ \textup{or}\ m>n
\end{cases}
\end{equation*}
for every $f\in \M_+(0,1)$. By~\eqref{optspace},
\begin{align*}
\|u\|_{(L^{p,q})\sp m_{\opt}(\rn)} &\approx \|u\|_{(L^{p,q})\sp
m(\rn)}+\|u\|_{L^{p,q}(\rn)} =
\|u^*\|_{Z(0,1)}+\|u^*\|_{L^{p,q}(0,\infty)}
\\
& =
\|u^*\|_{Z(0,1)}+\|s^{\frac{1}{p}-\frac{1}{q}}u^*(s)\|_{L^{q}(0,\infty)}.
\end{align*}
Hence, the conclusion follows, via an analogous argument as in the
proof of equation \eqref{11}.

\end{proof}

 \noindent {\bf Acknowledgments.} This research was
partly supported by the Research Project of Italian Ministry of
University and Research (MIUR)  2012TC7588 ``Elliptic and parabolic
partial differential equations: geometric aspects, related
inequalities, and applications" 2012,  by Research Project of GNAMPA
of Italian INdAM (National Institute of High Mathematics)
U2016/000464 ``Propriet\`{a} quantitative e qualitative di soluzioni
di equazioni ellittiche e paraboliche'' 2016, and the grant
P201-13-14743S of the Grant Agency of the Czech Republic.


\begin{thebibliography}{999}



\bibitem{AcerbiMingione}
E.~Acerbi and R.~Mingione. \textit{Regularity results for stationary
electro-rheological fluids.} Arch. Rat. Mech. Anal. \textbf{164}
(2002), 213--259.


\bibitem{RAA}
\by{\name{R.A.}{Adams}} \book{Sobolev spaces} \publ{Academic Press,
New York, 1975}
\endbook



\bibitem{Ball}
J.~M. Ball. \textit{Convexity conditions and existence theorems in
nonlinear elasticity.}
\newblock {Arch. Rational Mech. Anal.} \textbf{63} (1976/77), 337--403.



\bibitem{BS}
\by{\name{C.}{Bennett}\et\name{R.}{Sharpley}} \book{Interpolation of
operators} \publ{Pure and Applied Mathematics Vol.\ 129, Academic
Press, Boston 1988}
\endbook


\bibitem{Baroni}
P.~Baroni. \textit{Riesz potential estimates for a general class of
quasilinear
  equations.}
\newblock {Calc. Var. Part. Diff. Equat.} \textbf{53} (2015), 803--846.


\bibitem{BreitSchirra}
D.~Breit and O.~D. Schirra. \textit{Korn-type inequalities in
{O}rlicz-{S}obolev spaces involving the
  trace-free part of the symmetric gradient and applications to regularity
  theory.}
\newblock {J. Anal. Appl. (ZAA)} \textbf{31} (2012), 335--356.

\bibitem{BreitStrofVerde}
D.~Breit, B.~Stroffolini, and A.~Verde. \textit{A general regularity
theorem for functionals with $\varphi$-growth.}
\newblock {J. Math. Anal. Appl.} \textbf{383} (2011),  226--233.

\bibitem{BulDiSchw}
M.~Bul{\'{\i}}{\v{c}}ek, K.~Diening, and S.~Schwarzacher.
\textit{Existence, uniqueness and optimal regularity results for
very weak
  solutions to nonlinear elliptic systems.}
\newblock {Anal. PDE} \textbf{9} (2016), 1115--1151.

\bibitem{BulMaMa}
M.~Bul{\'{\i}}{\v{c}}ek, M.~Majdoub, and J.~M{\'{a}}lek.
\textit{Unsteady flows of fluids with pressure dependent viscosity
in
  unbounded domains.}
\newblock {Nonlinear Anal. Real World Appl.} \textbf{11} (2010), 3968--3983.


\bibitem{CGS}
\by{\name{M.}{Carro}, \name{A.}{Garc\'\i a~del
Amo}\et\name{J.}{Soria}} \paper{Weak-type weights and normable
Lorentz spaces} \jour{Proc.\ Amer.\ Math.\ Soc.} \vol{124} \yr{1996}
\pages{849--857}
\endpaper

\bibitem{CPSS}
\by{\name{M.}{Carro}, \name{L.}{Pick}, \name{J.}{Soria}
\et\name{V.}{Stepanov}} \paper{On embeddings between classical
Lorentz spaces} \jour{Math.\ Inequal.\ Appl.} \vol{4} \yr{2001}
\pages{397--428}
\endpaper

\bibitem{Ci1}\by{\name{A.}{Cianchi}} \paper{A sharp embedding theorem for Orlicz-Sobolev spaces}
\jour{Indiana Univ. Math. J.} \vol{45} \yr{1996} \pages{39--65}
\endpaper

\bibitem{Ci4}\by{\name{A.}{Cianchi}} \paper{Boundedness of solutions to variational problems
under general growth conditions} \jour{ Comm. Partial Differential
Equations } \vol{22} \yr{1997} \pages{ 1629--1646}
\endpaper

\bibitem{Ci2}\by{\name{A.}{Cianchi}} \paper{Optimal Orlicz-Sobolev embeddings}
\jour{ Rev. Mat. Iberoamericana} \vol{20} \yr{2004} \pages{427--474}
\endpaper

\bibitem{Ci3}\by{\name{A.}{Cianchi}} \paper{Higher-order Sobolev and Poincar\'{e} inequalities in Orlicz spaces}
\jour{ Forum Math.} \vol{18} \yr{2006} \pages{745--767}
\endpaper


\bibitem{CParkiv}
\by{\name{A.}{Cianchi} \et\name{L.}{Pick}} \paper{Sobolev
 embeddings into $BMO$, $VMO$ and $L_\infty$} \jour{Arkiv Mat.} \vol{36}
\yr{1998} \pages{317--340}
\endpaper


\bibitem{CP-traces}
\by{\name{A.}{Cianchi} \et\name{L.}{Pick}} \paper{Optimal Sobolev
trace embeddings} \jour{Trans.\ Amer.\ Math.\ Soc.} \vol{368}
\yr{2016} \pages{8349--8382}
\endpaper

\bibitem{CPS}
\by{\name{A.}{Cianchi},
\name{L.}{Pick}\et\name{L.}{Slav\'{\i}kov\'a}} \paper{Higher-order
Sobolev  embeddings and isoperimetric inequalities} \jour{Adv.\
Math.} \vol{273} \yr{2015} \pages{568--650}
\endpaper

\bibitem{CR}
\by{\name{A.}{Cianchi} \et \name{M.}{Randolfi}}
 \paper{On the modulus of continuity of weakly differentiable functions}
 \jour{Indiana Univ. Math. J.} \vol{60} \yr{2011} \pages{1939--1973}
 \endpaper

\bibitem{CKMP}
\by{\name{M.}{\'Cwikel}, \name{A.}{Kami\'nska},
\name{L.}{Maligranda}\et\name{L.}{Pick}} \paper{Are generalized
Lorentz ``spaces'' really spaces?} \jour{Proc. Amer. Math. Soc.}
\vol{132} \yr{2004} \pages{3615--3625}
\endpaper


\bibitem{EKP}
\by {\name{D.E.}{Edmunds}, \name{R.}{Kerman}\et\name{L.}{Pick}}
\paper {Optimal Sobolev imbeddings involving rearrangement-invariant
quasinorms} \jour {J. Funct.\ Anal.} \vol{170} \yr{2000}
\pages{307--355}
\endpaper


\bibitem{Eyring}
H.~J. Eyring. \textit{Viscosity, plasticity, and diffusion as
example of absolute reaction
  rates.}
\newblock {J. Chemical Physics} \textbf{4} (1936), 283--291.


\bibitem{T2}
\by {\name{R.}{Kerman}\et\name{L.}{Pick}} \paper {Optimal Sobolev
imbeddings} \jour {Forum Math.} \vol{18, 4} \yr{2006}
\pages{535--570}
\endpaper

\bibitem{Korolev}
A.~G. Korolev. \textit{On the boundedness of generalized solutions
of elliptic differential
  equations with nonpower nonlinearities.}
\newblock {Mat. Sb.} \textbf{180} (1989), 78--100 (Russian).

\bibitem{Lieberman}
G.~M. Lieberman. \textit{The natural generalization of the natural
conditions of
  {L}adyzenskaya and {U}ral'ceva for elliptic equations.}
\newblock {Comm. Part. Diff. Eq.} \textbf{16} (1991), 311--361.


\bibitem{L}
\by{\name{G.G.}{Lorentz}} \paper{On the theory of spaces $\Lambda$}
\jour{Pacific J. Math} \vol{1} \yr{1951} \pages{411--429}
\endpaper


\bibitem{Marcellini}
P.~Marcellini. \textit{Regularity for elliptic equations with
general growth conditions.}
\newblock {J. Diff. Eq.} \textbf{105} (1993), 296--333.

\bibitem{Mabook}
\by{\name{V.G.}{Maz'ya}} \book{Sobolev spaces with applications to
elliptic partial differential equations} \publ{Springer, Berlin,
2011}
\endbook

\bibitem{OP}
\by{\name{B.}{Opic}\et\name{L.}{Pick}} \paper{On generalized
Lorentz-Zygmund spaces} \jour{Math. Inequal. Appl.} \vol{2}
\yr{1999} \pages{391--467}
\endpaper

\bibitem{FS}
\by{\name{L.}{Pick}, \name{A.}{Kufner},
\name{O.}{John}\et\name{S.}{Fu\v c\'{\i}k}} \book{Function spaces.
Vol. 1. Second revised and extended edition} \jour{Math. Inequal.
Appl.} \publ{de Gruyter \& Co., Berlin, 2013}
\endbook

\bibitem{Poh}
\by{\name{S.I.}{Pohozaev}} \paper{On the imbedding Sobolev theorem
for $pl=n$ \textup{(in Russian)}} \jour{Dokl. Conf., Sect. Math.
Moscow Power Inst.} \vol{} \yr{1965} \pages{158--170}
\endpaper

\bibitem{Sa}
\by {\name{E.}{Sawyer}} \paper {Boundedness of classical operators
on classical Lorentz spaces} \jour {Studia Math.} \vol{96} \yr{1990}
\pages{145--158}
\endpaper


\bibitem{So}
\by {\name{J.}{Soria}} \paper {Lorentz spaces of weak type} \jour
{Quart. J. Math. Oxford Ser. (2)} \vol{49} \yr{1998} \pages{93--103}
\endpaper


\bibitem{St}
\by {\name{R.S.}{Strichartz}} \paper {A note on Trudinger' s
extension of Sobolev' s inequality} \jour {Indiana Univ. Math. J.}
\vol{21} \yr{1972} \pages{841--842}
\endpaper


\bibitem{Talenti79}
G.~Talenti. \textit{Nonlinear elliptic equations, rearrangements of
functions and
  {O}rlicz spaces.}
\newblock {Ann. Mat. Pura Appl.} \textbf{120} (1979), 159--184.



\bibitem{Talenti90}
G.~Talenti. \textit{Boundedness of minimizers.}
\newblock {Hokkaido Math. J.} \textbf{19} (1990), 259--279.



\bibitem{Tru}
\by {\name{N.S.}{Trudinger}} \paper{On imbeddings into Orlicz spaces
and some applications} \jour{J. Mech. Anal.} \vol{17} \yr{1967}
\pages{473--483}
\endpaper

\bibitem{Yud}
\by {\name{V.I.}{Yudovich}}
 \paper{Some estimates connected with integral operators and with solutions of elliptic equations \textup{(in Russian)}} \jour{Soviet Math. Dokl.}  \vol{2} \yr{1961} \pages{746--749}
 \endpaper

\bibitem{V}
\by {\name{J.}{Vyb\'{\i}ral}} \paper {Optimal Sobolev imbeddings on
$\Rn$} \jour {Publ. Mat.} \vol{51} \yr{2007} \pages{17-44}
\endpaper


\bibitem{Wr}
A.~Wri{\'{o}}blewska. \textit{Steady flow of non-{N}ewtonian
fluids--monotonicity methods in
  generalized {O}rlicz spaces.}
\newblock {Nonlinear Anal.} \textbf{72} (2010), 4136--4147.




\end{thebibliography}
\end{document}

\bibitem{BS}
C.~Bennett and R.~Sharpley.
\newblock {\em Interpolation of operators}, volume 129 of {\em Pure and Applied
  Mathematics}.
\newblock Academic Press, Inc., Boston, MA, 1988.

\bibitem{Boyd:Pacific}
D.~W. Boyd.
\newblock Indices for the {O}rlicz spaces.
\newblock {\em Pacific J. Math.}, 38:315--323, 1971.

\bibitem{Cianchi:Indiana}
A.~Cianchi.
\newblock A sharp embedding theorem for {O}rlicz-{S}obolev spaces.
\newblock {\em Indiana Univ. Math. J.}, 45(1):39--65, 1996.

\bibitem{Cianchi:Comm}
A.~Cianchi.
\newblock Boundedness of solutions to variational problems under general growth
  conditions.
\newblock {\em Comm. Partial Differential Equations}, 22(9-10):1629--1646,
  1997.

\bibitem{Cianchi:Forum}
A.~Cianchi.
\newblock Higher-order {S}obolev and {P}oincar\'e inequalities in {O}rlicz
  spaces.
\newblock {\em Forum Math.}, 18(5):745--767, 2006.

\bibitem{CKP}
A.~Cianchi, R.~Kerman, and L.~Pick.
\newblock Boundary trace inequalities and rearrangements.
\newblock {\em J. Anal. Math.}, 105:241--265, 2008.

\bibitem{CianchiPick:AM}
A.~Cianchi and L.~Pick.
\newblock Sobolev embeddings into {BMO}, {VMO}, and {$L_\infty$}.
\newblock {\em Ark. Mat.}, 36(2):317--340, 1998.

\bibitem{CianchiPick:TAMS}
A.~Cianchi and L.~Pick.
\newblock Optimal {S}obolev trace embeddings.
\newblock {\em Trans. Amer. Math. Soc.}, 368(12):8349--8382, 2016.

\bibitem{CPStrace}
A.~Cianchi, L.~Pick, and L.~Slav{\'{\i}}kov{\'a}.
\newblock {S}obolev embeddings, rearrangement-invariant spaces and {F}rostman
  measures.
\newblock {\em Preprint}.

\bibitem{CPS}
A.~Cianchi, L.~Pick, and L.~Slav{\'{\i}}kov{\'a}.
\newblock Higher-order {S}obolev embeddings and isoperimetric inequalities.
\newblock {\em Adv. Math.}, 273:568--650, 2015.

\bibitem{CianchiRandolfi}
A.~Cianchi and M.~Randolfi.
\newblock On the modulus of continuity of weakly differentiable functions.
\newblock {\em Indiana Univ. Math. J.}, 60(6):1939--1973, 2011.

\bibitem{DT}
T.~K. Donaldson and N.~S. Trudinger.
\newblock Orlicz-{S}obolev spaces and imbedding theorems.
\newblock {\em J. Funct. Anal.}, 8:52--75, 1971.

\bibitem{EGO}
D.~E. Edmunds, P.~Gurka, and B.~Opic.
\newblock Double exponential integrability of convolution operators in
  generalized {L}orentz-{Z}ygmund spaces.
\newblock {\em Indiana Univ. Math. J.}, 44:19--43, 1995.

\bibitem{Eyring}
H.~J. Eyring.
\newblock Viscosity, plasticity, and diffusion as example of absolute reaction
  rates.
\newblock {\em J. Chemical Physics}, 4:283--291, 1936.

\bibitem{HMT}
J.~A. Hempel, G.~R. Morris, and N.~S. Trudinger.
\newblock On the sharpness of a limiting case of the {S}obolev imbedding
  theorem.
\newblock {\em Bull. Austral. Math. Soc.}, 3:369--373, 1970.

\bibitem{KP}
R.~Kerman and L.~Pick.
\newblock Optimal {S}obolev imbeddings.
\newblock {\em Forum Math.}, 18(4):535--570, 2006.

\bibitem{NAFSA98}
M.~Krbec and A.~Kufner, editors.
\newblock {\em Nonlinear analysis, function spaces and applications. {V}ol. 6}.
  Academy of Sciences of the Czech Republic, Mathematical Institute, Prague,
  1999.

\bibitem{Mazya}
V.~G. Maz'ya.
\newblock {\em Sobolev spaces, with applications to elliptic partial
  differential equations}.
\newblock Springer, Berlin, 2011.

\bibitem{Musil}
V.~Musil.
\newblock Optimal {O}rlicz domains in {S}obolev embeddings into {M}arcinkiewicz
  spaces.
\newblock {\em J. Funct. Anal.}, 270(7):2653--2690, 2016.

\bibitem{Pokhozhaev}
S.~I. Pohozhaev.
\newblock On the imbedding {S}obolev theorem for $pl=n$.
\newblock {\em Doklady Conference, Section Math. Moscow Power Inst.},
  165:158--170 (Russian), 1965.

\bibitem{Strichartz:Indiana}
R.~S. Strichartz.
\newblock A note on {T}rudinger's extension of {S}obolev's inequalities.
\newblock {\em Indiana Univ. Math. J.}, 21:841--842, 1971/72.

\bibitem{Stromberg:Indiana}
J.-O. Str{\"o}mberg.
\newblock Bounded mean oscillation with {O}rlicz norms and duality of {H}ardy
  spaces.
\newblock {\em Indiana Univ. Math. J.}, 28(3):511--544, 1979.

\bibitem{Talenti79}
G.~Talenti.
\newblock Nonlinear elliptic equations, rearrangements of functions and
  {O}rlicz spaces.
\newblock {\em Ann. Mat. Pura Appl.}, 120:159--184, 1979.

\bibitem{Talenti}
G.~Talenti.
\newblock {\em An embedding theorem. Partial differential equations and the
  calculus of variations, Vol. II}.
\newblock Birkh\"auser, Boston, MA, 1989.

\bibitem{Talenti90}
G.~Talenti.
\newblock Boundedness of minimizers.
\newblock {\em Hokkaido Math. J.}, 19:259--279, 1990.

\bibitem{Trudinger}
N.~S. Trudinger.
\newblock On imbeddings into {O}rlicz spaces and some applications.
\newblock {\em J. Math. Mech.}, 17:473--483, 1967.

We shall now derive a~lower estimate for
\[
\left\|
\int_t\sp{\infty}g\sp{*}(s)\chi_{(1,\infty)}(s)s\sp{\frac1n-1}\,ds
\right\|_{S(0,\infty)}.
\]
We have
\begin{align*}
\int_t\sp{\infty}g\sp{*}(s)\chi_{(1,\infty)}(s)s\sp{\frac1n-1}\,ds
&=
\int_t\sp{\infty}\sum_{i=1}\sp{k}a_i\chi_{(\frac12,b_i)}(s)s\sp{\frac1n-1}\chi_{(1,\infty)}(s)\,ds\\
&=
\int_t\sp{\infty}\sum_{i\in\{1,\dots,k\},b_i>1}a_i\chi_{(1,b_i)}(s)s\sp{\frac1n-1}\,ds\\
&= \chi_{(0,1)}(t) n
\sum_{i\in\{1,\dots,k\},b_i>1}a_i\left(b_i\sp{\frac1n}-1\right) + n
\sum_{i\in\{1,\dots,k\},b_i>1}a_i\chi_{(1,b_i)}(t)\left(b_i\sp{\frac1n}-t\sp{\frac1n}\right).
\end{align*}
So,
\begin{align*}
&\left\|
\int_t\sp{\infty}g\sp{*}(s)\chi_{(1,\infty)}(s)s\sp{\frac1n-1}\,ds
\right\|_{S(0,\infty)}\\
&\gtrsim
\sum_{i\in\{1,\dots,k\},b_i>1}a_i\left(b_i\sp{\frac1n}-1\right)\|\chi_{(0,1)}(t)\|_{S(0,\infty)}
+ \left\| \sum_{i\in\{1,\dots,k\},b_i>1}a_i\left(b_i\sp{\frac1n}-
t\sp{\frac1n}\right)\chi_{(1,b_i)}(t)\right)\|_{S(0,\infty)}.
\end{align*}

======================

In accordance with the definition given above,
$\|\cdot\|_{\widetilde Y(0,1)}$ will denote the local version of the
norm $\|\cdot\|_{Y(0,\infty)}$. We define the functional
$\|\cdot\|_{\widetilde Y\sp1(0,1)}$ by
\begin{equation}\label{E:definition-of-Y-opt}
\|g\|_{(\widetilde Y\sp 1)'(0,1)}=\|t\sp{\frac
1n}g\sp{**}(t)\|_{\widetilde Y'(0,1)}, \quad g\in\M(0,1).
\end{equation}
We also denote by $\|\cdot\|_{Y\sp1(0,\infty)}$ the
rearrangement-invariant function norm whose localized version is
$\|\cdot\|_{\widetilde Y\sp1(0,1)}$. That is,
\begin{equation}\label{E:definition-of-Y-opt-2}
\|f\|_{Y\sp1(0,\infty)} = \|f\sp*\chi_{(0,1)}\|_{\widetilde
Y\sp1(0,1)}
\end{equation}